\newtheorem{theorem}{Theorem}[section]
\newtheorem{lemma}[theorem]{Lemma}
\newtheorem{cor}[theorem]{Corollary}
\newtheorem{prop}[theorem]{Proposition}
\theoremstyle{definition}
\theoremstyle{remark}
\newtheorem{remark}[theorem]{Remark}
\numberwithin{equation}{section}
\newcommand{\abs}[1]{\lvert#1\rvert}
\newcommand{\spt}{\mbox{\rm spt}}
\newcommand{\qbin}[2]{\left[\begin{matrix} #1 \\ #2 \end{matrix} \right]}
\newcommand{\bin}[1]{\binom{#1}{2}}
\newcommand{\qbas}[5]{{}_2\phi_1\left(\begin{matrix} 
    #1, &#2; &#3, &#4 \\ 
        &#5  &    & 
\end{matrix}\right)}
\newcommand{\qbasc}[7]{{}_3\phi_2\left(\begin{matrix} 
    #1, &#2, &#3; &#4, &#5 \\ 
        &#6, &#7    & 
\end{matrix}\right)}
\newcommand{\LHS}{\mbox{LHS}}
\newcommand{\RHS}{\mbox{RHS}}
\newcommand{\sgn}{\textnormal{sgn}}
\newcommand{\Parans}[1]{\left(#1\right)}
\newcommand{\SB}{\overline{\mbox{\rm S}}}
\newcommand{\sptBar}[2]{\overline{\mbox{\normalfont spt}}_{#1}\Parans{#2}}
\newcommand{\STwoB}{\mbox{\rm S2}}
\newcommand{\Mspt}[1]{\mbox{\normalfont M2spt}\Parans{#1}}
\newcommand\leg[2]{\genfrac{(}{)}{}{}{#1}{#2}} 
\begin{document}
\newcommand\mylabel[1]{\label{#1}}
\newcommand{\beqs}{\begin{equation*}}
\newcommand{\eeqs}{\end{equation*}}
\newcommand{\beq}{\begin{equation}}
\newcommand{\eeq}{\end{equation}}
\newcommand\eqn[1]{(\ref{eq:#1})}
\newcommand\thm[1]{\ref{thm:#1}}
\newcommand\lem[1]{\ref{lem:#1}}
\newcommand\propo[1]{\ref{propo:#1}}
\newcommand\corol[1]{\ref{cor:#1}}
\newcommand\sect[1]{\ref{sec:#1}}

\title[Two-variable Hecke-Rogers identities]
{Universal Mock Theta Functions \\
and 
\\
Two-variable Hecke-Rogers identities}

\author{F. G. Garvan}
\address{Department of Mathematics, University of Florida, Gainesville,
FL 32611-8105}
\email{fgarvan@ufl.edu}
\thanks{A preliminary version of this work was given earlier in
the Experimental Mathematics Seminar, Rutgers University, April 25, 2013.
See 
\url{http://youtu.be/oz2mdkd5jX4}
for the online video.}

\subjclass[2010]{11F27, 11F37, 11P82, 33D15}

\date{\today}                   


\keywords{mock-theta functions, Hecke-Rogers identities, spt-functions}

\begin{abstract}
We obtain two-variable Hecke-Rogers identities for three universal mock theta
functions. This implies that many of Ramanujan's mock theta functions,
including all the third order functions, have a Hecke-Rogers-type
double sum representation. We find new generating function identities for
the Dyson rank function, the overpartition rank function, the
$M2$-rank function and related spt-crank functions. Results are proved using
the theory of basic hypergeometric functions.
\end{abstract}

\maketitle

\section{Introduction}
\mylabel{sec:intro}

In this paper we obtain two-variable generalizations of the
following Hecke-Rogers identities.

\begin{align}
\prod_{n=1}^\infty (1 - q^n)^2
&=
\sum_{n=0}^\infty \sum_{m=-[n/2]}^{[n/2]} (-1)^{n+m} 
q^{\frac{1}{2}(n^2-3m^2) + \frac{1}{2}(n+m)},
\mylabel{eq:HR1}
\\
\prod_{n=1}^\infty (1 - q^n) (1 - q^{2n})
&=
\sum_{n=0}^\infty \sum_{m=-[n/2]}^{[n/2]} (-1)^{n+m} 
q^{\frac{1}{2}(n^2-2m^2) + \frac{1}{2}n},
\mylabel{eq:HR2}
\\
\prod_{n=1}^\infty (1 - q^n) (1 - q^{2n})
&=
\sum_{n=0}^\infty \sum_{m=-[n/3]}^{[n/3]} (-1)^{n} 
q^{\frac{1}{2}(n^2-8m^2) + \frac{1}{2}n},
\mylabel{eq:HR3}
\\
\prod_{n=1}^\infty (1 - q^n) (1 - q^{2n})
&=
\sum_{n=0}^\infty \sum_{m=-n}^{n} (-1)^{n} 
q^{\frac{1}{2}(2n^2-m^2) + \frac{1}{2}(2n +m)}.
\mylabel{eq:HR4}
\end{align}
Hecke \cite{He25} was the first to systematically consider identities 
of this type. Equation \eqn{HR1} was found by Hecke 
\cite[Equation (7),p.425]{He25} but is originally due to
L.~J.~Rogers \cite[p.323]{Ro1894}.
Identities of this type arose in Kac and Petersen's \cite{KP} work
on character formulas for infinite dimensional Lie algebras and string
functions. Equation \eqn{HR3} is due to Kac and Petersen 
\cite[final equation]{KP}. Andrews \cite{An-84} derived \eqn{HR2}, \eqn{HR3}
using his constant term method. Bressoud \cite{Br86} derived \eqn{HR2}, 
\eqn{HR4} using $q$-Hermite polynomials.

Our generalizations of \eqn{HR1}--\eqn{HR4} are in terms of
the universal mock theta functions
\begin{align}
R(z,q) &= \sum_{n=0}^\infty \frac{q^{n^2}}
                                {(zq;q)_n (z^{-1}q;q)_n},
\mylabel{eq:Rdef}
\\
H(z,q) &= \sum_{n=0}^\infty \frac{(-1;q)_n q^{\frac{1}{2}n(n+1)}}
                                {(zq;q)_n (z^{-1}q;q)_n},
\mylabel{eq:Hdef}
\\
K(z,q) &= \sum_{n=0}^\infty \frac{(-1)^n (q;q^2)_n q^{n^2}}
                                {(zq^2;q^2)_n (z^{-1}q^2;q^2)_n}.
\mylabel{eq:Kdef}
\end{align}
Here and throughout the paper we use the standard $q$-notation.
\begin{align*}
(a;q)_{\infty} &= \prod_{k=0}^\infty (1-aq^k),
\\
(a;q)_{n} &= \frac{(a;q)_{\infty}}{(aq^n;q)_{\infty}},
\\
(a_1,a_2,\dots,a_j;q)_{\infty} 
&= (a_1;q)_{\infty}(a_2;q)_{\infty}\dots(a_j;q)_{\infty},
\\
(a_1,a_2,\dots,a_j;q)_{n} 
&=
(a_1;q)_{n}(a_2;q)_{n}\dots(a_j;q)_{n}.
\end{align*}
The functions \eqn{Rdef}--\eqn{Kdef} are called \textit{universal mock theta}
functions because Hickerson \cite{Hi88a}, \cite{Hi88b} and
Gordon and McIntosh \cite{Go-McI} have shown that each of the classical mock theta
functions may be expressed as specializations of these functions up to
the addition of a modular form.

We note that the functions $R(z,q)$, $H(z,q)$ and $K(z,q)$ are generating
functions for various rank-type functions. Let
\begin{align*}
N(m,n) 
&= \mbox{the number of partitions of $n$ with Dyson rank $m$ (\cite{Dy44})},\\
\overline{N}(m,n)
&= \mbox{the number of overpartitions of $n$ with Dyson rank $m$ (\cite{Br-Lo-Os09})},\\
{N2}(m,n)
&= \mbox{the number of partitions of $n$ with distinct odd parts 
         with $M_2$-rank $m$}
\\
& \mbox{ (\cite{Be-Ga02}, \cite{Lo-Os09})}.
\end{align*}
Then
\begin{align}
\sum_{n=0}^\infty\sum_{m} N(m,n) z^m q^n &= R(z,q),
\mylabel{eq:NRid}\\
\sum_{n=0}^\infty\sum_{m} \overline{N}(m,n) z^m q^n &= H(z,q),
\mylabel{eq:NBHid}\\
\sum_{n=0}^\infty\sum_{m} N2(m,n) (-1)^n z^m q^n &= K(z,q),
\mylabel{eq:N2Kid}
\end{align}
so that
\begin{align}
R(1,q) &= \prod_{n=1}^\infty \frac{1}{(1-q^n)},
\mylabel{eq:R1}\\
H(1,q) &= \prod_{n=1}^\infty \frac{(1+q^n)}{(1-q^n)}
= \prod_{n=1}^\infty \frac{(1-q^{2n})}{(1-q^n)^2},
\mylabel{eq:H1}\\
K(1,q) &= \prod_{n=1}^\infty \frac{(1-q^{2n-1})}{(1-q^{2n})}
= \prod_{n=1}^\infty \frac{(1-q^{n})}{(1-q^{2n})^2},
\mylabel{eq:K1}\\
K(1,-q) &= \prod_{n=1}^\infty \frac{(1+q^{2n-1})}{(1-q^{2n})}.
\mylabel{eq:K1b}
\end{align}


We now collect our generalizations of \eqn{HR1}--\eqn{HR4} into
\begin{theorem}
\mylabel{thm:rankthm}
\begin{align}
&(zq)_\infty (z^{-1}q)_\infty (q)_\infty 
\sum_{n=0}^\infty \frac{q^{n^2}}{(zq)_n (z^{-1}q)_n} 
=(zq)_\infty (z^{-1}q)_\infty (q)_\infty R(z,q) 
\mylabel{eq:NEWrankid}\\
&= \frac{1}{2}\sum_{n=0}^\infty 
\left( \sum_{j=0}^{[n/2]} (-1)^{n+j}
(z^{n-3j}  + z^{3j-n}) q^{\frac{1}{2}(n^2-3j^2) + \frac{1}{2}(n-j)}\right.
\nonumber\\
& \qquad \qquad + \left. \sum_{j=1}^{[n/2]} (-1)^{n+j}
(z^{n-3j+1}  + z^{3j-n-1}) q^{\frac{1}{2}
(n^2-3j^2) + \frac{1}{2}(n+j)}\right),
\nonumber
\end{align}
\begin{align}
&(1+z)(zq)_\infty (z^{-1}q)_\infty (q)_\infty 
\sum_{n=0}^\infty \frac{(-1)_n q^{\frac{1}{2}n(n+1)}}{(zq)_n (z^{-1}q)_n} 
=(1+z)(zq)_\infty (z^{-1}q)_\infty (q)_\infty H(z,q)
\nonumber\\
&\qquad= \sum_{n=0}^\infty 
 \sum_{\abs{m} \le [n/2]} (-1)^{n+m} (z^{n-2\abs{m}+1}  + z^{2\abs{m}-n}) q^{\frac{1}{2}(n^2-2m^2) + \frac{1}{2}n}
\mylabel{eq:CONJ1a}\\
\nonumber\\
&\qquad= \sum_{n=0}^\infty 
 \sum_{\abs{m} \le [n/3]} (-1)^{n}
(z^{n+1-4\abs{m}}  + z^{4\abs{m}-n}) q^{\frac{1}{2}(n^2-8m^2) + \frac{1}{2}n},
\mylabel{eq:CONJ1b}
\end{align}

\begin{align}
&(zq^2;q^2)_\infty (z^{-1}q^2;q^2)_\infty (q^2;q^2)_\infty 
\sum_{n=0}^\infty \frac{(-1)^n q^{n^2} (q;q^2)_n}{(zq^2;q^2)_n 
(z^{-1}q^2;q^2)_n} 
\mylabel{eq:CONJ2}\\
&=(zq^2;q^2)_\infty (z^{-1}q^2;q^2)_\infty (q^2;q^2)_\infty K(z,q)
\nonumber\\
&= \sum_{n=0}^\infty 
\left( \sum_{m=0}^{n} (-1)^{n}
z^{m-n} q^{\frac{1}{2}(2n^2-m^2) + \frac{1}{2}(2n-m)}
+ \sum_{m=1}^{n} (-1)^{n}
z^{n-m+1} q^{\frac{1}{2}
(2n^2-m^2) + \frac{1}{2}(2n+m)}\right).
\nonumber
\end{align}
\end{theorem}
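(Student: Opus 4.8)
\medskip\noindent
\textbf{Proof proposal.} The plan is to derive each of the three identities in \thm{rankthm} from an Appell--Lerch (Lambert-series) representation of the relevant universal mock theta function, and then to convert that single sum into a Hecke--Rogers double sum by Hecke's classical splitting device. The classical identities \eqn{HR1}--\eqn{HR4} will drop out as the $z=1$ specializations --- up to the harmless overall factor $(1+z)|_{z=1}=2$ carried by \eqn{CONJ1a}--\eqn{CONJ1b} --- and serve, together with \eqn{R1}--\eqn{K1b} on the product side, as a running consistency check.

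\emph{Step 1: Lambert-series representations.} First I would record the Appell--Lerch forms of the three generating functions. For the Dyson rank this takes the shape
\[
(q)_\infty\,R(z,q)=(1-z)\sum_{n\in\mathbb Z}\frac{(-1)^n q^{n(3n+1)/2}}{1-zq^n},
\]
and there are analogous bilateral-series representations of $H(z,q)$ (carrying an extra $(-q)_\infty$ and a quadratic exponent $n(n+1)$) and of $K(z,q)$ (to the base $q^2$, with a $(q;q^2)_\infty$ factor and half-integer shifts). These are the standard Lambert-series forms of the generating functions \eqn{NRid}--\eqn{N2Kid}; each is classical and follows from standard basic-hypergeometric summation and transformation formulas (for instance the ${}_1\psi_1$ summation, or Bailey's lemma applied to a suitable Bailey pair).

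\emph{Step 2: From the Lambert series to the double sum.} Take \eqn{NEWrankid} as the model. Multiplying the displayed representation by $(zq)_\infty(z^{-1}q)_\infty$ turns the left-hand side of \eqn{NEWrankid} into $(z;q)_\infty(z^{-1}q;q)_\infty\sum_n(-1)^nq^{n(3n+1)/2}/(1-zq^n)$. Split the bilateral sum at $n=0$ and expand $(1-zq^n)^{-1}$ as a geometric series in the common annulus $q<\abs{z}<1$ --- positive powers of $z$ for $n\ge0$, negative powers for $n<0$; it is exactly this mismatch that forces an \emph{indefinite} quadratic form, and hence a genuine Hecke--Rogers sum rather than a product of two thetas --- and combine with Jacobi's triple product for the theta factor $(z;q)_\infty(z^{-1}q;q)_\infty$. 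The innermost sum is geometric and collapses; after reabsorbing the $(q)_\infty$ produced by the triple product (an Euler/pentagonal-number telescoping) and regrouping by the total power of $q$, one lands on exactly the right-hand side of \eqn{NEWrankid}, with the constraint $0\le j\le[n/2]$ appearing precisely as the support of the collapsed sum. The same argument run from the $H$-representation produces \eqn{CONJ1a}; performing the geometric collapse with the other pairing of the two indices --- the two-variable lift of the classical coincidence of \eqn{HR2} and \eqn{HR3} --- produces the companion form \eqn{CONJ1b}. For \eqn{CONJ2} one repeats everything to the base $q^2$; there the two inner families of the double sum, indexed $0\le m\le n$ and $1\le m\le n$, correspond respectively to the $z$-powers $z^k$ with $k\ge0$ and with $k<0$.

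\emph{The main obstacle.} Essentially all the difficulty sits in two places. The first is getting Step~1 \emph{exactly} right --- in particular the $(-q)_\infty$ and the exponent $n(n+1)$ for $H$, and for $K$ the base-$q^2$ normalization, the $(q;q^2)_\infty$ factor and the half-integer shifts --- since a single misplaced sign or exponent contaminates everything downstream. The second is the bookkeeping in Step~2: the geometric expansions converge only in overlapping annuli, so the split-and-recombine needs care, and one must verify that the collapsed sum has \emph{precisely} the advertised support ($\abs{m}\le[n/2]$ in \eqn{CONJ1a}, $\abs{m}\le[n/3]$ in \eqn{CONJ1b}, $0\le m\le n$ in \eqn{CONJ2}) rather than a shifted or fattened range; this is where the genuine content of a Hecke--Rogers identity lives. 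A route that bypasses Step~1 is to feed the defining series \eqn{Rdef}--\eqn{Kdef} directly into a single ${}_2\phi_1$ or ${}_3\phi_2$ transformation whose right-hand side already has Hecke--Rogers shape (or to invoke the Hickerson--Mortenson dictionary between Appell--Lerch sums and Hecke-type double sums); with that approach the difficulty is relocated to finding the transformation that lands on exactly the three right-hand sides above.
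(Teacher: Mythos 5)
Your route is genuinely different from the paper's, and the difference matters. The paper does not work from Appell--Lerch representations at all: it proves \eqn{NEWrankid} by first establishing the spt-crank identity \eqn{NEWSid} of Theorem \thm{mainthm} via coefficient extraction in $z$ together with Bailey pairs (Propositions \propo{prop1} and \propo{prop2} and the limiting form of Bailey's Lemma), and then transfers the result to $R(z,q)$ through the relation \eqn{SRid} plus Proposition \propo{prop3} and the classical identity \eqn{HR1}. The identities \eqn{CONJ1a} and \eqn{CONJ2} are proved by a different coefficient-extraction argument using the ${}_3\phi_2$ transformation \cite[(III.10)]{Ga-Ra-book}, the finite Jacobi triple product \eqn{fJTP}, Heine's transformation and the Rogers--Fine identity (Propositions \propo{propfJTP}, \propo{RAML}, \propo{RAMLA}); and \eqn{CONJ1b} is obtained from \eqn{CONJ1a} by Milne's bijection, not by a second collapse. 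Indeed the paper remarks that it was \emph{unable} to prove \eqn{CONJ1a}--\eqn{CONJ2} by Bailey pairs, so no single uniform mechanism is used.

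The genuine gap in your proposal sits exactly where you suspect it does, in Step~2, but it is worse than bookkeeping. Multiplying the bilateral Lambert series by $(z;q)_\infty(z^{-1}q;q)_\infty$, expanding both the triple product and the geometric series, produces a \emph{triple} sum, and the reduction of such a product to a Hecke-type double sum is not a clean collapse: in the Hickerson--Mortenson framework the relation between an Appell--Lerch sum and a Hecke-type double sum $f_{a,b,c}$ as in \eqn{fabc-def} carries an additional two-dimensional theta correction term that must be evaluated and shown to match (or vanish) for each of the three identities; nothing in your sketch addresses this, and the support conditions ($\abs{m}\le[n/2]$, $\abs{m}\le[n/3]$, $0\le m\le n$) and the precise $z$-exponents $z^{n-3j}+z^{3j-n}$ etc.\ are exactly what the correction analysis would have to deliver. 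The paper itself explicitly lists as an open problem whether the methods of \cite{Hi-Mo12} can be used to give an alternative proof of Theorem \thm{rankthm}, so your plan, while plausible in outline, is asserting as routine precisely the step the author could not (or did not) carry out. To make it a proof you would need either to run the Hickerson--Mortenson correction computation in full for each of the three quadratic forms, or to fall back on something like the paper's coefficient-of-$z^k$ strategy, where the inner sum over the collapsed index is evaluated by a concrete false-theta identity such as \eqn{RAML1}.
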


In view of \eqn{R1}--\eqn{K1} we see that the Hecke-Rogers identities 
\eqn{HR1}--\eqn{HR4} follow by putting $z=1$ in \eqn{NEWrankid}--\eqn{CONJ2}.

\begin{cor}
\mylabel{corol:mock3}
Each of Ramanujan's third order mock theta functions
has a Hecke-Rogers-type double sum representation.
\end{cor}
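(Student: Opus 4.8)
The plan is to deduce Corollary~\ref{corol:mock3} directly from the $z$-parametrised identity \eqn{NEWrankid} of Theorem~\ref{thm:rankthm}, since --- as recalled above (Hickerson \cite{Hi88a}, Gordon--McIntosh \cite{Go-McI}) --- every classical mock theta function is a specialization of $R$, $H$ or $K$ up to the addition of a modular form, and for the seven third order functions $f,\phi,\psi,\chi,\omega,\nu,\rho$ the relevant universal function is $R$. First I would record the exact specializations that fall out of the telescoping identity $(zq;q)_n(z^{-1}q;q)_n=\prod_{k=1}^n\bigl(1-(z+z^{-1})q^k+q^{2k}\bigr)$: taking $z+z^{-1}$ equal to $-2$, $0$, $1$ (that is, $z=-1$, $z=i$, $z=e^{\pi i/3}$) turns this denominator into $(-q;q)_n^2$, $(-q^2;q^2)_n$, $(-q^3;q^3)_n/(-q;q)_n$, whence
\[
f(q)=R(-1,q),\qquad \phi(q)=R(i,q),\qquad \chi(q)=R(e^{\pi i/3},q).
\]
The remaining four functions are reduced to these by the classical identities of Watson and of the lost notebook (collected in \cite{Go-McI}): each of $\psi,\omega,\nu,\rho$ is a linear combination, with theta-function coefficients, of $f,\phi,\chi$ and of $\omega$, and $\omega$ itself is a specialization of $R$ (with $q\mapsto q^2$) up to a theta function. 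Thus every third order $\mu(q)$ has the shape $\mu(q)=\sum_i c_{\mu,i}(q)\,R(\zeta_{\mu,i},q^{r_{\mu,i}})+\theta_\mu(q)$ with each $c_{\mu,i}$ an eta-quotient, each $\zeta_{\mu,i}$ a root of unity, $r_{\mu,i}\in\{1,2\}$, and $\theta_\mu$ a theta function.

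The second step is to substitute each such $z=\zeta$ (and, when $r_{\mu,i}=2$, also $q\mapsto q^2$) into \eqn{NEWrankid}. On the left the prefactor $(zq)_\infty(z^{-1}q)_\infty(q)_\infty$ becomes an explicit eta-quotient --- for instance $(q^2;q^2)_\infty^2/(q;q)_\infty$ at $z=-1$; on the right the two inner sums keep the Hecke--Rogers shape (the indefinite quadratic form $n^2-3j^2$ over $0\le j\le[n/2]$, with sign $(-1)^{n+j}$), while each binomial $z^{n-3j}+z^{3j-n}$ and $z^{n-3j+1}+z^{3j-n-1}$ collapses to an explicit constant such as $\pm2$ or $\zeta^k+\zeta^{-k}$. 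Hence \eqn{NEWrankid} at $z=\zeta$ reads
\[
\bigl(\text{eta-quotient}\bigr)\cdot R(\zeta,q)=\bigl(\text{Hecke--Rogers double sum with constant coefficients}\bigr).
\]
Dividing by the eta-quotient, multiplying by $c_{\mu,i}(q)$, summing over $i$, simplifying the resulting eta-quotients, and adding $\theta_\mu(q)$ then yields, for each of the seven functions, a representation of the form $(\text{eta-quotient})\cdot\mu(q)=(\text{Hecke--Rogers double sum})+(\text{eta-quotient})\cdot\theta_\mu(q)$; the trailing summand, being a theta function, is itself a sum of the required type by the Jacobi triple product (and may be absorbed), which proves the corollary.

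I do not anticipate any genuine obstacle; the argument is bookkeeping layered on top of Theorem~\ref{thm:rankthm}. The only points needing care are (i) recording the correct classical expression $\mu=\sum_i c_{\mu,i}R(\zeta_{\mu,i},\cdot)+\theta_\mu$ for each of $f,\phi,\psi,\chi,\omega,\nu,\rho$ from the Watson and lost-notebook relations, and (ii) the routine manipulation of the eta-quotients $(zq)_\infty(z^{-1}q)_\infty(q)_\infty$ at $z=-1,\,i,\,e^{\pi i/3}$ (and their $q\mapsto q^2$ analogues), together with the verification that the $z$-power binomials on the right of \eqn{NEWrankid} collapse as stated.
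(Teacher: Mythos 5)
Your argument is essentially the paper's: the paper also reduces all seven third order functions to the universal function, citing Hickerson and Mortenson's catalogue expressing each in terms of $g(x,q)$ and then using $g(x,q)=x^{-1}\left(-1+\frac{1}{1-x}R(x,q)\right)$ to invoke Theorem \thm{rankthm}, rather than re-deriving the specializations from Watson's relations as you do. One small correction to your bookkeeping: for $\omega$, $\nu$, $\rho$ the relevant argument of $R$ is a power of $q$ (e.g.\ $\omega(q)=g(q,q^2)$), not a root of unity, though this is harmless since substituting $z=q^a$ into \eqn{NEWrankid} merely folds the $z$-powers into linear terms of the exponent and the double sum keeps its Hecke--Rogers shape.
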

\begin{remark}
Previously the only such representations were known for
the third order functions $\psi(q)$ (Andrews \cite{An13}), and
$\phi(q)$, $\nu(q)$ (Mortenson \cite{Mo13}).
We also note that Hickerson and Mortenson \cite{Hi-Mo12a}
have found Hecke-Rogers double
sum representations for all the classical mock theta functions except
those of third order. 
\end{remark}
\begin{proof}
We recall the universal mock theta function
\beq
g(x,q) := 
x^{-1}\left(-1 + \sum_{n=0}^\infty \frac{q^{n^2}}{(x)_{n+1} (x^{-1}q)_n}\right).
\mylabel{eq:gdef}
\eeq
It is well-known that each of Ramanujan's third mock theta functions
$f(q)$, $\phi(q)$, $\psi(q)$, $\chi(q)$, $\omega(q)$, $\nu(q)$, $\rho(q)$
can be written solely in terms of $g(x,q)$. These expressions have been
cataloged by Hickerson and Mortenson \cite[(5.4)--(5.10)]{Hi-Mo12}.
The result follows from Theorem \thm{rankthm} since
\beq
g(x,q) = x^{-1}\left(-1 + \frac{1}{1-x} R(x,q) \right).
\mylabel{eq:gR}
\eeq
\end{proof}

A striking example is Ramanujan's third order mock theta function
\beq
f(q) = \sum_{n=0}^\infty \frac{q^{n^2}}{(-q;q)_n^2} = R(-1,q).
\mylabel{eq:fdef}
\eeq
Putting $z=-1$ in \eqn{NEWrankid} gives
\beq
f(q) = \frac{(q)_\infty}{(q^2;q^2)_\infty^2}
\sum_{n=0}^\infty \sum_{m=-[n/2]}^{[n/2]}
 \sgn(m) q^{\frac{1}{2}(n^2-3m^2) + \frac{1}{2}(n-m)},
\mylabel{eq:HRf}
\eeq
where $\sgn(m)=1$ if $m\ge 0$ and otherwise $\sgn(m)=-1$.
We may rewrite this identity as
\beq
\sum_{n=0}^\infty q^{\frac{1}{2}n(n+1)} \, f(q) = 
\sum_{n=0}^\infty \sum_{m=-[n/2]}^{[n/2]}
 \sgn(m) q^{\frac{1}{2}(n^2-3m^2) + \frac{1}{2}(n-m)}.
\mylabel{eq:HRfv2}
\eeq
This gives a recurrence for the coefficients of the $q$-series of $f(q)$.
A similar but different result was found by Imamo{\=g}lu, Raum and Richter 
\cite[Theorem 1.1]{Im-Ra-Ri} using the method of holomorphic projection
applied to harmonic weak Maass forms.

By putting $z=-1$ in \eqn{CONJ2} we obtain a similar
identity for the second order mock theta function
\beq
\mu(q) = \sum_{n=0}^\infty \frac{(-1)^n q^{n^2} (q;q^2)_n}{(-q^2;q^2)_n^2} = K(-1,q),
\mylabel{eq:mudef}
\eeq
namely
\beq
\mu(q) = \frac{(q^2;q^2)_\infty}{(q^4;q^4)_\infty^2}
\sum_{n=0}^\infty \sum_{m=-n}^{n}
 \sgn(m) (-1)^m q^{\frac{1}{2}(2n^2-m^2) + \frac{1}{2}(2n-m)},
\mylabel{eq:HRmu}
\eeq
or
\beq
\sum_{n=0}^\infty q^{n(n+1)} \, \mu(q) = 
\sum_{n=0}^\infty \sum_{m=-n}^{n}
 \sgn(m) (-1)^m q^{\frac{1}{2}(2n^2-m^2) + \frac{1}{2}(2n-m)},
\mylabel{eq:HRmuv2}
\eeq
This confirms an identity found earlier by
Hickerson and Mortenson \cite{Hi-Mo12a}.                              
For completeness we examine \eqn{CONJ1a} near $z=-1$.
We divide both sides by $(1+z)$,
let $z\to-1$ and simplify to obtain
\beq
\sum_{n=0}^\infty q^{\frac{1}{2}n(n+1)}
\sum_{n=1}^\infty \frac{ q^{\frac{1}{2}n(n+1)}}
                       {(-q;q)_n (1+q^n)}
=
\sum_{n=1}^\infty \sum_{m=0}^{[n/2]}
(-1)^m (2n - 4m + 1 - \delta_{m,0}(n+1)) 
q^{\frac{1}{2}(n^2-2m^2) + \frac{1}{2}n},
\mylabel{eq:HRnewv2}
\eeq
where $\delta_{m,0}=1$ if $m=0$ and $\delta_{m,0}=0$ otherwise.

\section{Genesis}
\mylabel{sec:genesis}

We now describe how we were led to our two-variable Hecke-Rogers
identities. It began with a new identity for the Andrews \cite{An08b}
spt-function. Let $\spt(n)$ denote the number of smallest parts in the
partitions of $n$. Then
\begin{align}
\sum_{n=1}^\infty \spt(n) q^n &= 
\sum_{n=1}^\infty (q^n + 2 q^{2n} + 3 q^{3n} + \cdots ) \frac{1}{(q^{n+1};q)_\infty}
\mylabel{eq:Sgen}
\\
&=\sum_{n=1}^\infty \frac{q^n}{(1-q^n) (q^{n};q)_\infty}
\nonumber
\\
&=\frac{1}{(q;q)_\infty}
\sum_{n=1}^\infty \frac{q^n (q;q)_{n-1}}{(1-q^n)}.
\nonumber
\end{align}
Andrews \cite[Theorem 4]{An08b} found that
\begin{align}
\sum_{n=1}^\infty \spt(n) q^n &= \frac{1}{(q;q)_\infty}\left(
\sum_{n=1}^\infty \frac{nq^n}{1-q^n} +
\sum_{n=1}^\infty \frac{(-1)^n q^{\frac{1}{2}n(3n+1)} (1 + q^n)}
                       {(1 - q^n)^2}
\right)
\mylabel{eq:sptID2}
\\
&= \frac{1}{(q;q)_\infty}
\sum_{n=1}^\infty \frac{(-1)^{n-1} q^{\frac{1}{2}n(n+1)} (1 - q^{n^2}) (1 + q^n)}
                       {(1 - q^n)^2},
\nonumber
\end{align}
by letting $z=1$ in \cite[Theorem 2.4]{An-Ga-Li12}. This generating
function identity provides an efficient method for calculating
the spt-coefficients.
We find that
\begin{align*}
\sum_{n=1}^\infty \spt(n) q^n &= 
q+3\,{q}^{2}+5\,{q}^{3}+10\,{q}^{4}+14\,{q}^{5}+26\,{q}^{6}+ \cdots 
\\
& \quad \cdots + 
600656570957882248155746472836274\,{q}^{1000} + \cdots
\end{align*}
We tried multiplying by different powers of $\prod_{n=1}^\infty (1-q^n)$ and
stumbled upon
\begin{align}
\prod_{n=1}^\infty (1-q^n)^3 \sum_{n=1}^\infty \spt(n) q^n &= 
q-4\,{q}^{3}-{q}^{5}+9\,{q}^{6}+{q}^{8}+4\,{q}^{9}-16\,{q}^{10}-4\,{q}^{13}
+ \cdots 
\mylabel{eq:S3conj1}
\\
& \quad \cdots 
-1936\,{q}^{990}-900\,{q}^{995}-49\,{q}^{996}-705\,{q}^{1000} + \cdots,
\nonumber
\end{align}
which suggests something is going on ($705=961-256$ is the first non-square!). 
The final result is given below in Corollary \corol{maincor}.
If we let
$$
\sum_{n=1}^\infty a(n) q^n = \prod_{n=1}^\infty (1-q^n)^3 \sum_{n=1}^\infty \spt(n) q^n,
$$
then we find
$$
\sum_{n=0}^\infty a(5n+2) q^n = 
-25\,{q}^{5}+100\,{q}^{15}+25\,{q}^{25}-225\,{q}^{30}-25\,{q}^{40}-100
\,{q}^{45}+400\,{q}^{50} + \cdots,
$$
and one is led to conjecture that
\beq
a(5n+2) = -25 \, a( n/5),
\mylabel{eq:congs35}
\eeq
for $n\ge0$. This is quite surprising since the generating function
for $\spt(n)$ is not a modular form but a quasi-mock modular form from
the work of Bringmann \cite{Br08}.
A similar behaviour occurs for any prime $\ell\equiv \pm 5\pmod{12}$.
See Corollary \corol{heckecong} below for the general result. 

The idea is to find a $z$-analog of \eqn{S3conj1}.
In \cite{An-Ga-Li12} we found a nice $z$-analog of the generating function \eqn{Sgen}
\begin{align*}
S(z,q) &= \sum_{n=1}^\infty \sum_m N_S(m,n) z^m q^n \\
       &= \sum_{n=1}^\infty \frac{q^n (q^{n+1};q)_\infty }
                                 {(zq^n;q)_\infty (z^{-1}q^n;q)_\infty}.
\end{align*}
We note that
$$
S(1,q) = \sum_{n=1}^\infty \spt(n) q^n.         
$$
From \cite[(2.5)]{An-Ga-Li12}, \cite[(3.23)]{An-Ga-Li13} we have the following
generating function identities.
\begin{align}
S(z,q)  
   &= \frac{1}{(q)_\infty} \left(\sum_{n=-\infty}^\infty \frac{(-1)^{n-1}q^{n(n+1)/2}}{(1-zq^n)(1-z^{-1}q^n)}
   - \sum_{n=-\infty}^\infty \frac{(-1)^{n-1}q^{n(3n+1)/2}}{(1-zq^n)(1-z^{-1}q^n)} \right),
\nonumber\\
  &= \frac{1}{(zq)_\infty}
\sum_{n=1}^\infty \frac{(-1)^{n-1} q^{n(n+1)/2}}{(q)_n (1-z^{-1} q^n)} \,
\left(\frac{z^n-1}{z-1}\right).
\mylabel{eq:Szqid2}
\end{align}
We find that
\begin{align}
& (z;q)_\infty (z^{-1};q)_\infty (q;q)_\infty S(z,q)
\mylabel{eq:S3conj2}\\
&\quad ={\frac { \left( -{z}^{2}+2\,z-1 \right) q}{z}}+{\frac { \left( {z}^{4}
-2\,{z}^{2}+1 \right) {q}^{3}}{{z}^{2}}}+{\frac { \left( {z}^{2}-2\,z+
1 \right) {q}^{5}}{z}}+{\frac { \left( -{z}^{6}+2\,{z}^{3}-1 \right) {
q}^{6}}{{z}^{3}}}
\nonumber\\
&\quad +{\frac { \left( -{z}^{2}+2\,z-1 \right) {q}^{8}}{z}}
+{\frac { \left( -{z}^{4}+2\,{z}^{2}-1 \right) {q}^{9}}{{z}^{2}}}+{
\frac { \left( {z}^{8}-2\,{z}^{4}+1 \right) {q}^{10}}{{z}^{4}}} + \cdots,
\nonumber
\end{align}
and we are clearly on the right track.
We are eventually led to conjecture 
\begin{theorem}
\mylabel{thm:mainthm}
\beq
(z)_\infty (z^{-1})_\infty (q)_\infty S(z,q)
= \sum_{n=0}^\infty \sum_{m=0}^n (1 - z^{\frac{1}{2}(n-m)})^2 
z^{\frac{1}{2}(m-n)} \leg{-4}{n} \leg{12}{m} 
q^{\frac{1}{12}( \frac{3n^2-m^2}{2} - 1)},
\mylabel{eq:NEWSid}
\eeq
where $\leg{\cdot}{\cdot}$ is the Kronecker symbol.
\end{theorem}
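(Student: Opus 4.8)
The plan is to deduce Theorem~\thm{mainthm} from the rank identity \eqn{NEWrankid} of Theorem~\thm{rankthm}, via the generating-function identity
\begin{equation*}
(z)_\infty(z^{-1})_\infty(q)_\infty\,S(z,q)=(q)_\infty(zq)_\infty(z^{-1}q)_\infty\,R(z,q)-(q)_\infty^2,
\end{equation*}
equivalently $(1-z)(1-z^{-1})S(z,q)=R(z,q)-C(z,q)$, where $C(z,q)=(q)_\infty/\bigl((zq)_\infty(z^{-1}q)_\infty\bigr)$ is the crank generating function; this is the two-variable form of the relation expressing $\spt(n)$ through the second moments of the crank and rank. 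I would prove it directly. By the first form of \eqn{Szqid2},
\begin{equation*}
(q)_\infty S(z,q)=\sum_{n\in\mathbb Z}\frac{(-1)^{n-1}q^{\frac12 n(n+1)}\bigl(1-q^{n^2}\bigr)}{(1-zq^n)(1-z^{-1}q^n)},
\end{equation*}
while the classical Appell--Lerch expansions $(q)_\infty R(z,q)=(1-z)\sum_{n}(-1)^{n}q^{\frac12 n(3n+1)}/(1-zq^n)$ and $(q)_\infty C(z,q)=(1-z)\sum_{n}(-1)^{n}q^{\frac12 n(n+1)}/(1-zq^n)$ give $(q)_\infty\bigl(R(z,q)-C(z,q)\bigr)=(1-z)\sum_{n}(-1)^{n-1}q^{\frac12 n(n+1)}(1-q^{n^2})/(1-zq^n)$. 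A short partial-fraction computation then shows that the difference $(1-z)(1-z^{-1})(q)_\infty S(z,q)-(q)_\infty\bigl(R(z,q)-C(z,q)\bigr)$ equals $-(1-z)z^{-1}\sum_{n\in\mathbb Z}(-1)^{n-1}q^{\frac12 n(n+1)}(1-q^{n^2})(1-q^n)/\bigl((1-zq^n)(1-z^{-1}q^n)\bigr)$, which vanishes since its $n$-th and $(-n)$-th summands cancel (and the $n=0$ term is zero). As $(z)_\infty(z^{-1})_\infty=(1-z)(1-z^{-1})(zq)_\infty(z^{-1}q)_\infty$ and $(zq)_\infty(z^{-1}q)_\infty(q)_\infty C(z,q)=(q)_\infty^2$, the displayed identity follows.

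With this identity in hand, it remains to match $(q)_\infty(zq)_\infty(z^{-1}q)_\infty R(z,q)-(q)_\infty^2$ with the right side of \eqn{NEWSid}. Setting $z=1$ in \eqn{NEWrankid} identifies $(q)_\infty^2$ with the right side of \eqn{NEWrankid} at $z=1$ (since $R(1,q)=1/(q)_\infty$); subtracting, and using $z^a+z^{-a}-2=(1-z^a)^2z^{-a}$,
\begin{align*}
(q)_\infty(zq)_\infty(z^{-1}q)_\infty R(z,q)-(q)_\infty^2
&=\tfrac12\sum_{n\ge0}\sum_{j=0}^{[n/2]}(-1)^{n+j}\bigl(1-z^{n-3j}\bigr)^2z^{3j-n}\,q^{\frac12(n^2-3j^2)+\frac12(n-j)}\\
&\qquad{}+\tfrac12\sum_{n\ge0}\sum_{j=1}^{[n/2]}(-1)^{n+j}\bigl(1-z^{n-3j+1}\bigr)^2z^{3j-n-1}\,q^{\frac12(n^2-3j^2)+\frac12(n+j)}.
\end{align*}

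In the first double sum substitute $N=2n+1$, $M=6j+1$; in the second, $N=2n+1$, $M=6j-1$. Then $\leg{-4}{N}=(-1)^{n}$ and $\leg{12}{M}=(-1)^{j}$; the power of $z$ becomes $z^{(M-N)/2}$ while $1-z^{n-3j}$ (resp.\ $1-z^{n-3j+1}$) becomes $1-z^{(N-M)/2}$; the exponent of $q$ equals $\tfrac{N^2-1}{8}-\tfrac{M^2-1}{24}=\tfrac1{12}\bigl(\tfrac12(3N^2-M^2)-1\bigr)$; and the range $0\le j\le[n/2]$ (resp.\ $1\le j\le[n/2]$) turns, in both cases, into $0<M<\tfrac32 N$. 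So each of the two sums becomes $\tfrac12$ times the sum of $\bigl(1-z^{(N-M)/2}\bigr)^2z^{(M-N)/2}\leg{-4}{N}\leg{12}{M}\,q^{\frac1{12}(\frac12(3N^2-M^2)-1)}$ over all odd $N\ge1$ with $\gcd(M,6)=1$ and $0<M<\tfrac32 N$, $M$ running over the residue class $1$ (resp.\ $-1$) modulo $6$. The involution $\sigma(N,M)=(2N-M,\,3N-2M)$ preserves $3N^2-M^2$, the parity of $N$, the condition $\gcd(M,6)=1$, and the sign $\leg{-4}{N}\leg{12}{M}$, and it interchanges $z^{(N-M)/2}$ with $z^{(M-N)/2}$, hence fixes the entire summand; its fixed locus is $M=N$, where $\bigl(1-z^{(N-M)/2}\bigr)^2=0$, and it carries the overflow region $N<M<\tfrac32 N$ bijectively onto $0<M<N$. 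Therefore each $\tfrac12$-sum collapses to the full sum over $0\le M\le N$ in its residue class, and the two classes together reconstitute the right-hand side of \eqn{NEWSid}.

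I expect the first paragraph to be the main obstacle: marshalling \eqn{Szqid2} together with the rank and crank Appell--Lerch expansions in exactly the normalizations used above, and carrying out the $n\mapsto-n$ cancellation carefully. The reindexing in the last two paragraphs is elementary but needs attention to the summation limits, the factor $\tfrac12$, and the behaviour of the Kronecker symbols under $\sigma$. A parallel route, closer to the basic-hypergeometric methods used to prove Theorem~\thm{rankthm}, would instead derive \eqn{NEWSid} directly from \eqn{Szqid2}, bypassing the rank identity.
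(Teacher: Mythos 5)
Your argument has a circularity problem. You take the two-variable rank identity \eqn{NEWrankid} of Theorem \thm{rankthm} as an input, but in this paper that identity is \emph{not} independently established: it is proved in Section \sect{rank} precisely by combining Theorem \thm{mainthm} (the statement you are asked to prove) with the relation \eqn{SRids} and Proposition \propo{prop3}. Indeed the concluding remarks state explicitly that ``our proof of our rank function result \eqn{NEWrankid} depends on first proving the spt-crank result \eqn{NEWSid}'', and that a direct Bailey-pair proof of \eqn{NEWrankid} is merely ``left to the interested reader.'' So unless you supply such an independent proof, your deduction of \eqn{NEWSid} from \eqn{NEWrankid} proves nothing new. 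The paper's actual proof goes the other way and from scratch: starting from the second form of \eqn{Szqid2}, it extracts the coefficient of $z^k$, and evaluates the resulting $q$-series via the Bailey pair of Proposition \propo{prop1}, Proposition \propo{prop2}, and the limiting form of Bailey's Lemma (equation \eqn{niceid}), then matches against the right side of \eqn{EQNEWSid} case by case. Your closing sentence gestures at this ``parallel route'' but does not carry it out; that route is the proof, not an alternative to it.

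That said, the content you do supply is correct and is essentially the paper's Section \sect{rank} run in reverse. Your first paragraph is a sound Appell--Lerch derivation of $(z)_\infty(z^{-1})_\infty(q)_\infty S(z,q)=(zq)_\infty(z^{-1}q)_\infty(q)_\infty R(z,q)-(q)_\infty^2$, which the paper instead imports as \eqn{SRid} from \cite[Corollary 2.5]{An-Ga-Li12}; the partial-fraction identity and the $n\mapsto-n$ cancellation check out. Your reindexing $N=2n+1$, $M=6j\pm1$ and the involution $\sigma(N,M)=(2N-M,3N-2M)$ are also correct (I verified that $\sigma$ preserves $3N^2-M^2$, the residue class of $M$ mod $6$, and the product $\leg{-4}{N}\leg{12}{M}$, fixes only $M=N$, and swaps the regions $0<M<N$ and $N<M<\tfrac32N$); this is exactly the combinatorial content of Proposition \propo{prop3} and of the passage between the $[n/3]$- and $[n/2]$-indexed forms in \eqn{EQNEWSid} and \eqn{NEWrankid}. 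In effect you have given a clean proof that Theorem \thm{mainthm} and identity \eqn{NEWrankid} are equivalent, but not a proof of either.
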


In this section we prove Theorem \thm{mainthm} using the theory
of basic hypergeometric functions including the method of
Bailey pairs. We start with equation \eqn{Szqid2}.
We will need the following identity \cite[p.216]{An-Ga-Li13}:
\beq
\frac{(q)_\infty}{(z^{-1}q)_\infty} = 1 + 
\sum_{n=1}^\infty \frac{(-1)^n q^{n(n+1)/2} (1-z^{-1})}
                       {(1-z^{-1}q^n)(q)_n}.
\mylabel{eq:FFWid}
\eeq
As noted in \cite[Theorem 3.5]{An-Ga-Li13} this is related to the
spt-like function due to Fokkink, Fokkink and Wang \cite{Fo-Fo-Wa}.
See also Andrews \cite[p.134]{An08b}.

A pair of sequences $(\alpha_n(a, q), \beta_n(a, q))$ is called a
\textit{Bailey pair} with parameters $(a, q)$ if
\beq
\beta_n(a, q) = \sum_{r=0}^n \frac{\alpha_r(a, q)}{(q; q)_{n-r}(aq; q)_{n+r}}
\mylabel{eq:baileypairdef}
\eeq
for all $n \geq 0$.  We will need
\begin{lemma}[Bailey's Lemma]
Suppose $(\alpha_n(a, q), \beta_n(a, q))$ is a Bailey pair with parameters $(a, q)$.
Then $(\alpha'_n(a, q), \beta'_n(a, q))$ is another Bailey pair with parameters
$(a, q)$, where
\begin{align*}
\alpha_n'(a, q)
&= \frac{(\rho_1; q)_n (\rho_2; q)_n (\frac{aq}{\rho_1\rho_2})^n }
            {(\frac{aq}{\rho_1}; q)_n (\frac{aq}{\rho_2}; q)_n } \alpha_n(a, q), \\
\beta_n'(a, q)
&= \sum_{j=0}^n \frac{(\rho_1; q)_j (\rho_2; q)_j (\frac{aq}{\rho_1\rho_2}; q)_{n-j}
       (\frac{aq}{\rho_1\rho_2})^j }
       {(q; q)_{n-j} (\frac{aq}{\rho_1}; q)_n (\frac{aq}{\rho_2}; q)_n }\beta_j(a,q).
    \end{align*}
\end{lemma}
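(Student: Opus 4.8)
The plan is the classical substitution argument. I would insert the defining relation of the Bailey pair, $\beta_j(a,q)=\sum_{r=0}^{j}\alpha_r(a,q)/\big((q;q)_{j-r}(aq;q)_{j+r}\big)$, into the stated formula for $\beta_n'(a,q)$, interchange the two summations so that $r$ runs from $0$ to $n$ on the outside and the old Bailey-lemma index $j$ runs from $r$ to $n$ inside, and pull out of the inner sum everything independent of $j$. It then suffices to show that for each fixed $r$ the inner sum
\[
T_{n,r}:=\sum_{j=r}^{n}\frac{(\rho_1;q)_j\,(\rho_2;q)_j\,(aq/\rho_1\rho_2;q)_{n-j}\,(aq/\rho_1\rho_2)^{j}}{(q;q)_{n-j}\,(q;q)_{j-r}\,(aq;q)_{j+r}}
\]
reduces to a single product of $q$-Pochhammer symbols which, reassembled with the pulled-out factor $\alpha_r(a,q)/\big((aq/\rho_1;q)_n(aq/\rho_2;q)_n\big)$, reproduces $\alpha_r'(a,q)/\big((q;q)_{n-r}(aq;q)_{n+r}\big)$ -- that is, precisely the Bailey-pair relation for the primed pair.

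To evaluate $T_{n,r}$ I would shift $j\mapsto r+i$ and apply standard $q$-Pochhammer manipulations -- splitting each symbol by $(x;q)_{r+i}=(x;q)_r(xq^r;q)_i$ and reflecting the factors $(aq/\rho_1\rho_2;q)_{n-j}$ and $(q;q)_{n-j}$ into ones indexed by $i$ -- so that, after extracting the $i$-independent part, $T_{n,r}$ becomes a fixed prefactor times a terminating ${}_3\phi_2$ in $i$ with argument $q$, one of whose upper parameters is $q^{-(n-r)}$. The one structural fact to check is that this ${}_3\phi_2$ is Saalsch\"utzian: the product of its two lower parameters equals $q$ times the product of its three upper parameters -- which holds precisely because of the shapes of $\alpha_n'$ and $\beta_n'$ prescribed in the lemma. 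The terminating $q$-Pfaff--Saalsch\"utz summation
\[
{}_3\phi_2\big(q^{-N},\,b,\,c;\,d,\,bcq^{1-N}/d;\,q,\,q\big)=\frac{(d/b;q)_N\,(d/c;q)_N}{(d;q)_N\,(d/bc;q)_N},
\]
with $N=n-r$ and $b,c,d$ the monomials in $\rho_1,\rho_2,a$ and powers of $q$ that arise, then closes the sum.

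What remains, and the only real obstacle, is bookkeeping: collapsing the product returned by Pfaff--Saalsch\"utz, together with the prefactor and the pulled-out factor, down to $\alpha_r'(a,q)/\big((q;q)_{n-r}(aq;q)_{n+r}\big)$. This amounts to repeated use of $(x;q)_{m+k}=(x;q)_m(xq^m;q)_k$ and of the $q$-Pochhammer reflection identities, plus careful tracking of the powers of $q$ and of $aq/\rho_1\rho_2$; no conceptual difficulty is expected once the ${}_3\phi_2$ parameters are matched. (As an alternative to Pfaff--Saalsch\"utz one could prove the needed evaluation of $T_{n,r}$ by induction on $n-r$, but the balanced-${}_3\phi_2$ route is the one I would take.)
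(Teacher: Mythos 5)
Your argument is the standard proof of Bailey's Lemma and it is correct: after substituting the defining relation for $\beta_j$ and interchanging summation, the inner sum over $j=r+i$ is indeed a terminating balanced ${}_3\phi_2$ with argument $q$ (upper parameters $q^{-(n-r)},\rho_1q^r,\rho_2q^r$, lower parameters $aq^{2r+1}$ and $\rho_1\rho_2q^{-(n-r)}/a$, so the Saalsch\"utzian condition holds), and the $q$-Pfaff--Saalsch\"utz summation collapses it to
$\frac{(\rho_1;q)_r(\rho_2;q)_r(aq/\rho_1\rho_2)^r\,(aq/\rho_1;q)_n(aq/\rho_2;q)_n}{(aq/\rho_1;q)_r(aq/\rho_2;q)_r\,(q;q)_{n-r}(aq;q)_{n+r}}$,
which, after the prefactor $1/\big((aq/\rho_1;q)_n(aq/\rho_2;q)_n\big)$ is restored, is exactly the Bailey-pair relation for $(\alpha_n',\beta_n')$. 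The paper states this lemma as a classical result and gives no proof of its own, so there is nothing to compare against; your route is the textbook one.
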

By letting $\rho_1$, $\rho_2\to\infty$ we obtain
\begin{cor}[Limiting Form of Bailey's Lemma]
Suppose $(\alpha_n(a, q), \beta_n(a, q))$ is a Bailey pair with parameters 
$(a, q)$.
Then $(\alpha'_n(a, q), \beta'_n(a, q))$ is another Bailey pair with parameters
$(a, q)$, where
\begin{align*}
\alpha_n'(a, q)
&= a^n q^{n^2} \alpha_n(a,q) \\
\beta_n'(a,q)
&= \sum_{j=0}^n \frac{a^j q^{j^2} \beta_j(a,q)}
                     {(q)_{n-j} }.
\end{align*}
\end{cor}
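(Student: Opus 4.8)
The plan is to obtain the Corollary directly from Bailey's Lemma by passing to the limit $\rho_1,\rho_2\to\infty$ in the formulas for $\alpha_n'(a,q)$ and $\beta_n'(a,q)$, and then to note that the Bailey pair relation \eqn{baileypairdef} survives the limit. The only analytic input required is the behaviour of a $q$-shifted factorial when its first argument becomes large. First I would record the elementary asymptotics: writing $1-\rho q^k = (-\rho q^k)(1-\rho^{-1}q^{-k})$ and taking the product over $0\le k\le n-1$ gives
\[
(\rho;q)_n = (-\rho)^n q^{\binom{n}{2}}\prod_{k=0}^{n-1}\bigl(1-\rho^{-1}q^{-k}\bigr),
\]
so that $(\rho;q)_n = (-\rho)^n q^{\binom{n}{2}}\bigl(1+O(\rho^{-1})\bigr)$ as $\rho\to\infty$, and in particular, for fixed $c$ and fixed length $m$, $(c/\rho;q)_m\to (0;q)_m = 1$.

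Next I would compute the limit of $\alpha_n'$. The numerator $(\rho_1;q)_n(\rho_2;q)_n\bigl(aq/(\rho_1\rho_2)\bigr)^n$ has leading behaviour $(-\rho_1)^n q^{\binom{n}{2}}\cdot(-\rho_2)^n q^{\binom{n}{2}}\cdot (aq)^n(\rho_1\rho_2)^{-n}$; the factors $(\rho_1\rho_2)^{\pm n}$ cancel and the surviving piece is $q^{2\binom{n}{2}}(aq)^n = a^n q^{n^2}$, while each denominator factor $(aq/\rho_i;q)_n\to 1$. Hence $\alpha_n'\to a^n q^{n^2}\alpha_n(a,q)$, as claimed. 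The computation for $\beta_n'$ is identical term by term: in the $j$-th summand $(\rho_1;q)_j(\rho_2;q)_j\bigl(aq/(\rho_1\rho_2)\bigr)^j\to a^j q^{j^2}$ by the same cancellation, the factor $(aq/(\rho_1\rho_2);q)_{n-j}\to 1$, and the two denominator factors $(aq/\rho_i;q)_n\to 1$, leaving $a^j q^{j^2}\beta_j(a,q)/(q)_{n-j}$. Since $\beta_n'$ is a \emph{finite} sum, I may pass to the limit termwise, which yields the stated formula.

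Finally I would argue that the limiting pair is again a Bailey pair. For every admissible choice of $\rho_1,\rho_2$ the pair produced by Bailey's Lemma satisfies the defining relation $\beta_n' = \sum_{r=0}^n \alpha_r'/\bigl((q;q)_{n-r}(aq;q)_{n+r}\bigr)$. Both sides are finite sums of rational functions of $\rho_1,\rho_2$, hence continuous where defined, so the identity persists as $\rho_1,\rho_2\to\infty$; therefore the limiting pair $\bigl(a^n q^{n^2}\alpha_n,\ \sum_j a^j q^{j^2}\beta_j/(q)_{n-j}\bigr)$ again satisfies \eqn{baileypairdef} and is a Bailey pair with parameters $(a,q)$. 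I do not expect a genuine obstacle here: the lemma does all the structural work, and the one point requiring care is the exponent bookkeeping, namely checking that the $(\rho_1\rho_2)^{\pm}$ factors cancel exactly and that the residual $q$-powers combine to $a^n q^{n^2}$ (respectively $a^j q^{j^2}$).
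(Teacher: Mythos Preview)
Your proposal is correct and is exactly the paper's approach: the paper simply says ``By letting $\rho_1$, $\rho_2\to\infty$ we obtain'' the corollary, and you have carefully filled in the exponent bookkeeping and the finite-sum limit argument that make this passage rigorous.
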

By letting $n\to\infty$ and using \eqn{baileypairdef} we obtain
\beq
\sum_{j=0}^\infty a^j q^{j^2} \beta_j
= \frac{1}{(aq;q)_\infty} \sum_{r=0}^\infty a^r q^{r^2} \alpha_r,
\mylabel{eq:baileypairlimsum}
\eeq
for any Bailey pair $(\alpha_n,\beta_n)$ with parameters $(a,q)$.

\begin{prop}
\mylabel{propo:prop1}
The following form a Bailey pair.
\begin{align}
  \mylabel{eq:pair1}
       \alpha_n&= \begin{cases}
         1, & n=0, \\
         q^{n^2}(a^n q^n - a^{n-1} q^{-n}), & n\geq1,
       \end{cases}
       &&
       \beta_n= \frac{q^n}{(q)_n (aq;q)_n}.
\end{align}
\end{prop}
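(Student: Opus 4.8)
The plan is to verify the Bailey pair relation \eqn{baileypairdef} directly, since the candidate $\alpha_n$ has a particularly simple telescoping shape. First I would observe that the proposed $\alpha_n$ for $n\ge 1$ can be written as $q^{n^2}(a^nq^n-a^{n-1}q^{-n})$, and one recognizes this as essentially the difference of consecutive terms of the sequence $a^{n}q^{n^2+n}$ (up to reindexing), which is the hallmark of the so-called unit Bailey pair deformed by the limiting Bailey lemma. Indeed, starting from the trivial Bailey pair $\alpha_n^{(0)} = (-1)^n q^{\binom n2}\frac{(1-aq^{2n})(a;q)_n}{(1-a)(q;q)_n}$, $\beta_n^{(0)}=\delta_{n,0}$ — or more simply from a known pair — and applying the limiting form of Bailey's Lemma (the Corollary above) once, one lands on a pair whose $\beta_n$ is a single-sum hypergeometric expression. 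So the cleanest route is: identify a standard seed pair, apply the Corollary, and then recognize the resulting $\beta_n$ as $q^n/((q)_n(aq;q)_n)$ after a $q$-series simplification.

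Alternatively, and probably faster to write, I would just plug $\beta_n = q^n/((q)_n(aq;q)_n)$ into the defining relation and solve for $\alpha_r$ by Bailey inversion. The inversion formula gives
\[
\alpha_n = (1-aq^{2n})\sum_{j=0}^n \frac{(aq;q)_{n+j-1}(-1)^{n-j}q^{\binom{n-j}{2}}}{(q;q)_{n-j}}\,\beta_j,
\]
and substituting $\beta_j = q^j/((q)_j(aq;q)_j)$ turns the right-hand side into a terminating ${}_2\phi_1$-type sum. The key step is then to evaluate that sum in closed form; I expect it collapses via the $q$-Chu–Vandermonde summation (or the $q$-Gauss sum in its terminating form) to exactly $q^{n^2}(a^nq^n - a^{n-1}q^{-n})$ for $n\ge1$ and to $1$ for $n=0$. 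Checking the $n=0,1,2$ cases by hand first would confirm the pattern and pin down which summation identity to invoke.

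The main obstacle will be the bookkeeping in that hypergeometric evaluation: getting the powers of $q$ and the shifted Pochhammer symbols $(aq;q)_{n+j-1}$ versus $(aq;q)_{n+j}$ exactly right, since the $-1$ offset in the subscript (coming from the $(1-z^{-1}q^n)$-type denominators that motivated this pair via \eqn{FFWid}) is easy to mishandle. Once the sum is in standard ${}_r\phi_s$ form with correct parameters, the summation itself is classical. A useful sanity check throughout is that setting $a=1$ should reduce everything to a known Bailey pair used in partition-theoretic proofs of Rogers–Ramanujan–type identities, and that the generating-function consequence \eqn{baileypairlimsum} applied to this pair should reproduce a recognizable theta-quotient, matching the $z\to$ specializations appearing in \eqn{Szqid2}.

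Finally, I would remark that the direct substitution method is self-contained and needs only \eqn{baileypairdef} plus one $q$-summation theorem, so it is the version I would actually include; the derivation-via-Bailey's-Lemma route is worth mentioning as motivation for where the pair comes from, but carrying it out fully would require first exhibiting the seed pair, which is extra overhead for no logical gain.
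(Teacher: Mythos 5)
Your second (inversion) route is genuinely different from the paper's: the paper never inverts, but instead starts from Slater's summation \cite[Eq.~(4.1)]{Sl51} with $c=q$ and $d\to\infty$, which is already a closed form for a sum of the shape $\sum_r \alpha_r/\bigl((q)_{n-r}(aq)_{n+r}\bigr)$, and massages it by elementary Pochhammer manipulations into $\beta_n=q^n/\bigl((q)_n(aq)_n\bigr)$. Your inversion formula is correct and the strategy is sound in principle, but the step you lean on fails as stated. Substituting $\beta_j=q^j/\bigl((q)_j(aq)_j\bigr)$ and computing the term ratio, the sum to be evaluated is
\[
\frac{(-1)^n q^{\binom{n}{2}}(aq;q)_{n-1}}{(q)_n}\;\qbas{q^{-n}}{aq^n}{q}{q^2}{aq},
\]
a terminating ${}_2\phi_1$ with argument $q^2$. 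Neither form of $q$-Chu--Vandermonde (equivalently, the terminating $q$-Gauss sum) applies: the two admissible arguments are $z=q$ and $z=Cq^n/B$, and with numerator parameter $B=aq^n$ and denominator parameter $C=aq$ the latter also equals $q$. Worse, at $z=q$ Chu--Vandermonde returns $(q^{1-n};q)_n\,(aq^n)^n/(aq;q)_n=0$ for $n\ge1$, whereas the value needed at $z=q^2$ is the nonzero quantity $(-1)^{n-1}a^{n-1}q^{\binom{n}{2}}(q)_n/(aq;q)_{n-1}$. So the classical summation you name cannot be the engine of the proof; this is the only nontrivial point of the argument and it is left unsupported.

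The evaluation is nevertheless true and can be supplied by hand: write $(aq^n;q)_j/(aq;q)_j=(aq^{j+1};q)_{n-1}/(aq;q)_{n-1}$, expand $(aq^{j+1};q)_{n-1}$ by the finite $q$-binomial theorem, and interchange the sums; the inner $j$-sum becomes $\sum_{j}\frac{(q^{-n};q)_j}{(q;q)_j}q^{(2+i)j}=(q^{2+i-n};q)_n$, which vanishes for $0\le i\le n-2$ and leaves only the $i=n-1$ term, yielding exactly the value above and hence $\alpha_n=q^{n^2}(a^nq^n-a^{n-1}q^{-n})$. With that replacement your proof closes, and it is a legitimately self-contained alternative to the paper's Slater-based verification. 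Your first (seed-pair) route, as you suspected, would need more work than you indicate: one application of the limiting Bailey lemma to the unit pair produces $\beta_n=1/(q)_n$, not $q^n/\bigl((q)_n(aq)_n\bigr)$, so a nonstandard seed would still have to be exhibited and verified.
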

\begin{proof}
In \cite[Eqn.(4.1), p.468]{Sl51} we let $c=q$ and $d\to\infty$ to obtain
\beq
\sum_{r=0}^n \frac{ (1-aq^{2r}) q^{r^2-r}a^r}{(a)_{n+r+1} (q)_{n-r}}
= \frac{1}{(q)_n(a)_n}.
\mylabel{eq:slaterid}
\eeq
We have
\beqs
\sum_{r=1}^n \frac{ (1-aq^{2r}) q^{r^2-r}a^r}{(aq)_{n+r} (q)_{n-r}}
= (1-a)\left( \frac{1}{(q)_n(a)_n} - \frac{(1-a)}{(a)_{n+1} (q)_n}\right),
\eeqs
\beqs
\sum_{r=1}^n \frac{ q^{r^2}(a^r q^r - a^{r-1}q^{-r})}
                       { (aq)_{n+r} (q)_{n-r} }
= (1-a^{-1})\left( \frac{1}{(q)_n(a)_n} - \frac{1}{(aq)_{n} (q)_n}\right),
\eeqs

\begin{align*}
\frac{1}{(aq)_n (q)_n} + 
&\sum_{r=1}^n \frac{ q^{r^2}(a^r q^r - a^{r-1}q^{-r})}
                       { (aq)_{n+r} (q)_{n-r} }
= -a^{-1}\frac{(1-a)}{(q)_n(a)_n} + \frac{a^{-1}}{(aq)_{n} (q)_n}\\
&= a^{-1}\left(\frac{-1}{(a)_n (aq)_{n-1}} + \frac{1}{(aq)_n (q)_n}\right)
= a^{-1}\frac{(-(1-aq^n)+1)}{(a)_n (aq)_{n}} 
= \frac{q^n}{(aq)_n (q)_n},
\end{align*}
and the result follows.
\end{proof}

\begin{prop}
\mylabel{propo:prop2}
For any nonnegative integer $n$ we have
\beq
\sum_{j=0}^n \frac{a^j q^{j^2+j} }{(q)_{n-j} (q)_j (aq)_j}
=
\sum_{j=0}^n \frac{(-1)^j a^j q^{j(j+1)/2}}
                  {(q)_{n-j}(aq)_n}.
\mylabel{eq:A1}
\eeq
\end{prop}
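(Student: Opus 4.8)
The plan is to clear the $n$-dependent factor $(aq)_n$ from the right-hand side and then compare generating functions in an auxiliary variable $t$. Write $\Lambda_n$ and $\Pi_n$ for the left- and right-hand sides of \eqn{A1}. Multiplying through by $(aq)_n$ one gets $(aq)_n\Pi_n=\sum_{j=0}^n(-1)^ja^jq^{j(j+1)/2}/(q)_{n-j}$, and since $(aq)_n/(aq)_j=(aq^{j+1};q)_{n-j}$ also $(aq)_n\Lambda_n=\sum_{j=0}^n a^jq^{j^2+j}(aq^{j+1};q)_{n-j}/((q)_{n-j}(q)_j)$. First I would multiply each by $t^n$, sum on $n$, interchange the order of summation, and apply Euler's identity $\sum_{m\ge0}t^m/(q)_m=1/(t;q)_\infty$ together with the $q$-binomial theorem $\sum_{m\ge0}(c;q)_mt^m/(q)_m=(ct;q)_\infty/(t;q)_\infty$; using also $(aq^{j+1}t;q)_\infty=(aqt;q)_\infty/(aqt;q)_j$ one obtains
\[
\sum_{n\ge0}(aq)_n\Pi_nt^n=\frac{1}{(t;q)_\infty}\sum_{j\ge0}(-1)^ja^jq^{j(j+1)/2}t^j
\]
and
\[
\sum_{n\ge0}(aq)_n\Lambda_nt^n=\frac{(aqt;q)_\infty}{(t;q)_\infty}\sum_{j\ge0}\frac{a^jq^{j^2+j}t^j}{(q)_j(aqt;q)_j}.
\]
Thus \eqn{A1} for all $n\ge0$ is equivalent to the equality of these two power series.

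After cancelling $(t;q)_\infty$ and substituting $x=aqt$ (so that $a^jq^{j^2+j}t^j=x^jq^{j^2}$ and $a^jq^{j(j+1)/2}t^j=x^jq^{j(j-1)/2}$), the remaining task is to prove
\[
(x;q)_\infty\sum_{j\ge0}\frac{q^{j^2}x^j}{(q)_j(x;q)_j}=\sum_{j\ge0}(-1)^jq^{j(j-1)/2}x^j\qquad(\star).
\]
To do this I would write $(x;q)_j=(x;q)_\infty/(xq^j;q)_\infty$, so that the left side of $(\star)$ becomes $\sum_{j\ge0}q^{j^2}x^j(xq^j;q)_\infty/(q)_j$, then expand $(xq^j;q)_\infty$ by Euler's identity $(y;q)_\infty=\sum_{k\ge0}(-1)^kq^{\binom{k}{2}}y^k/(q)_k$ with $y=xq^j$, obtaining a double sum. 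Collecting the coefficient of $x^N$ over all $j+k=N$ and using the elementary exponent identity $j^2+\binom{N-j}{2}+j(N-j)=\binom{N}{2}+\binom{j+1}{2}$, this coefficient equals
\[
\frac{(-1)^Nq^{\binom{N}{2}}}{(q)_N}\sum_{j=0}^N(-1)^jq^{\binom{j+1}{2}}\qbin{N}{j}=(-1)^Nq^{\binom{N}{2}},
\]
the last step being the terminating $q$-binomial theorem $\sum_{j=0}^N(-1)^jq^{\binom{j}{2}}z^j\qbin{N}{j}=(z;q)_N$ specialized to $z=q$. Since $(-1)^Nq^{\binom{N}{2}}$ is exactly the coefficient of $x^N$ on the right of $(\star)$, the identity follows, and with it \eqn{A1}.

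The one genuinely non-routine point is the decision to multiply by $(aq)_n$ before passing to generating functions: the series $\sum_n\Lambda_nt^n$ already factors cleanly as $1/(t;q)_\infty$ times $\sum_{j\ge0}a^jq^{j^2+j}t^j/((q)_j(aq)_j)$, but $\sum_n\Pi_nt^n$ does not, precisely because of the factor $(aq)_n^{-1}$; clearing it first puts both sides on the same footing, and everything after that is routine $q$-series bookkeeping, with the $q$-binomial theorem carrying the weight in $(\star)$. I would also remark that, in keeping with the rest of this section, the left side of \eqn{A1} is exactly the sequence $\beta'_n$ obtained by feeding the Bailey pair of Proposition \propo{prop1} (for which $\beta_j=q^j/((q)_j(aq)_j)$) into the Limiting Form of Bailey's Lemma, so $(\star)$ could equally be extracted from that Bailey-pair relation; I chose the generating-function route because it is shorter and self-contained.
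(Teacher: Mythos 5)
Your proof is correct, and it takes a genuinely different route from the paper's. The paper proves \eqn{A1} for each fixed $n$ in one stroke by quoting the ${}_2\phi_1\to{}_3\phi_2$ transformation (III.7) of Gasper and Rahman with $\gamma=aq$, $\beta=c^{-1}q$, $z=acq^{n+1}$, letting $c\to0^{+}$, and then rewriting $(q^{-n};q)_j$ via (I.12) of the same reference; it is a two-line derivation once the right transformation has been identified. You instead clear the factor $(aq)_n$, pass to generating functions in $t$, and reduce the whole family of identities simultaneously to the single one-variable identity $(\star)$, which you verify coefficient-by-coefficient. I checked the key steps: the factorization $(aq)_n/(aq)_j=(aq^{j+1};q)_{n-j}$, the two generating-function evaluations, the legitimacy of the substitution $x=aqt$ (both sides depend on $a,t$ only through $at$ after $(t;q)_\infty$ is cancelled), the exponent identity $j^2+\binom{N-j}{2}+j(N-j)=\binom{N}{2}+\binom{j+1}{2}$, and the specialization $z=q$ of the terminating $q$-binomial theorem giving $(q)_N$ — all are right. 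What your approach buys is self-containedness (only Euler's expansions and the $q$-binomial theorem are used) together with the isolation of the partial-theta identity $(\star)$, which has independent interest and is in the spirit of the false-theta identities the paper proves later (Proposition \propo{RAML} and its corollary); what the paper's approach buys is brevity. Your closing remark that LHS\eqn{A1} is exactly the $\beta_n'$ obtained by feeding the Bailey pair of Proposition \propo{prop1} into the Limiting Form of Bailey's Lemma is also accurate — that is precisely how \eqn{A1} is deployed in the proof of Theorem \thm{mainthm}.
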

\begin{proof}
We need \cite[p.241, (III.7)]{Ga-Ra-book}:
\beqs
\qbas{q^{-n}}{\beta}{q}{z}{\gamma}
= \frac{(\gamma/\beta)_n}{(\gamma)_n}
\qbasc{q^{-n}}{\beta}{\frac{\beta zq^{-n}}{\gamma}}{q}{q}{\frac{\beta q^{1-n}}{\gamma}}{0}.
\eeqs
We make the substitutions $\gamma = aq$, $\beta=c^{-1}q$, $z=acq^{n+1}$, so
that $\frac{\beta zq^{-n}}{\gamma} = q$ and
\beqs
\qbas{q^{-n}}{c^{-1}q}{q}{acq^{n+1}}{aq}
= \frac{(ac)_n}{(aq)_n}
\qbasc{q^{-n}}{c^{-1}q}{q}{q}{q}{\frac{c^{-1} q^{1-n}}{a}}{0}.
\eeqs
Letting $c\to0^{+}$ and then dividing both sides by $(q)_n$ we obtain
\beqs
\sum_{j=0}^n \frac{(q^{-n};q)_j (-1)^j a^j q^{j(j+3)/2 + nj}}
                  { (q)_n (q)_j (aq)_j}
=
\sum_{j=0}^n \frac{(q^{-n};q)_j a^j q^{j(1+n)}}
                  {(q)_n (aq)_n }.
\eeqs
The result follows by \cite[p.233, (I.12)]{Ga-Ra-book}.
\end{proof}

\begin{prop}
\mylabel{propo:prop3}
\begin{align}
&\sum_{m=0}^\infty \sum_{0 \le k < m/3}
(-1)^{m+k} z^{m-3k} q^{\frac{1}{2}(m^2-3k^2) + \frac{1}{2}(m-k)}
=
\sum_{m=0}^\infty \sum_{m/3 <  k \le  m/2}
(-1)^{m+k} z^{-m+3k} q^{\frac{1}{2}(m^2-3k^2) + \frac{1}{2}(m-k)},
\mylabel{eq:SPHR1}\\
&\sum_{m=0}^\infty \sum_{1 \le k < (m+1)/3}
(-1)^{m+k} z^{m-3k+1} q^{\frac{1}{2}(m^2-3k^2) + \frac{1}{2}(m+k)}
=
\sum_{m=0}^\infty \sum_{(m+1)/3 <  k \le  m/2}
(-1)^{m+k} z^{-m+3k-1} q^{\frac{1}{2}(m^2-3k^2) + \frac{1}{2}(m+k)}.
\mylabel{eq:SPHR2}
\end{align}
\end{prop}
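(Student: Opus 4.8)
The plan is to prove both \eqn{SPHR1} and \eqn{SPHR2} by a direct bijective reindexing of the double sums, exhibiting an explicit involution on the index set $\{(m,k)\}$ that swaps the ``$z^{m-3k}$-branch'' with the ``$z^{-m+3k}$-branch'' while preserving the weight. First I would observe that the quadratic form in the exponent of $q$ on both sides of \eqn{SPHR1} is the same, namely $Q(m,k) = \tfrac12(m^2 - 3k^2) + \tfrac12(m-k)$, and likewise the sign $(-1)^{m+k}$ and the constraint that $k$ lies strictly between $m/3$ and $m/2$ (on the right) or between $0$ and $m/3$ (on the left). So the task reduces to finding a transformation $(m,k)\mapsto(m',k')$ carrying $0\le k<m/3$ to $m'/3<k'\le m'/2$, with $Q(m',k') = Q(m,k)$, $(-1)^{m'+k'} = (-1)^{m+k}$, and $-m'+3k' = m-3k$ so that the powers of $z$ match.

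The key step is to find the right linear substitution. The constraint $-m'+3k' = m-3k$ together with $Q(m',k') = Q(m,k)$ should pin down the map essentially uniquely; I would set $k' = k + j$ and $m' = m + 3j$ for a shift parameter $j$ to be determined, or more likely the correct map mixes $m$ and $k$ linearly in a way that reflects the automorph group of the indefinite form $m^2 - 3k^2$ (together with the linear part $m - k$). Concretely, the substitution $m \mapsto 2m - 3k$, $k \mapsto m - 2k$ (or a sign-variant thereof) is the natural candidate: it is an involution, it negates $m - 3k$ up to an additive correction, and one checks it fixes $m^2 - 3k^2$; the presence of the linear term $\tfrac12(m-k)$ forces a small affine correction (a shift by a constant or by $1$), which is exactly why \eqn{SPHR2} looks like \eqn{SPHR1} but with $k$ shifted and an extra $z^{+1}$. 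I would verify the four conditions — exponent of $q$, sign, exponent of $z$, and that the index ranges map correctly onto each other — by routine algebra. Equation \eqn{SPHR2} is then handled by the identical argument after the shift $m \mapsto m$, $k \mapsto k$ that turns the $\tfrac12(m-k)$ in the exponent into $\tfrac12(m+k)$; equivalently one applies the \eqn{SPHR1} argument to the companion form and tracks the extra factor of $z$.

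The main obstacle I anticipate is getting the boundary cases and the strict-versus-weak inequalities exactly right: the ranges $0 \le k < m/3$ and $m/3 < k \le m/2$ are open at the $m/3$ end and the involution must not create or destroy terms with $3k = m$ or $3k = m+1$. Because $m^2 - 3m^2/9$ etc.\ are the fixed locus, I would check separately that the candidate map has no fixed points inside the relevant ranges (so nothing is left over), and that it carries the half-open interval bijectively to the other half-open interval; a short case analysis on $m \bmod 3$ handles this. Once the involution and its range behavior are confirmed, both identities follow term by term, and I would remark that these are precisely the ``splitting'' identities one needs to pass between the two sides of the Hecke--Rogers double sum in \eqn{NEWrankid} when reassembling the symmetric combination $z^{n-3j} + z^{3j-n}$.
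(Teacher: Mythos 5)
Your approach is essentially the paper's: the paper proves \eqn{SPHR1} by equating coefficients of $z^j$ (parametrizing the left sum by $m=3k+j$ with $k\ge 0$, the right by $m=3k-j$ with $k\ge j$, and shifting $k\mapsto k+j$), and the composite of these substitutions is exactly your involution $(m,k)\mapsto(2m-3k,m-2k)$, which indeed preserves both $m^2-3k^2$ and $m-k$ and negates $m-3k$ with no affine correction needed for \eqn{SPHR1} (the shifts by $1$ enter only for \eqn{SPHR2}, where $(m,k)\mapsto(2m-3k+1,m-2k+1)$ does the job). Your range check and fixed-point remark (the fixed locus $m=3k$ is excluded from both half-open ranges) all go through, so the plan is correct.
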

\begin{proof}
We show \eqn{SPHR1} by showing that the coefficient of $z^j$ agree
on both sides for $j\ge1$. On the left side this occurs when $m=3k+j$,
where $k\ge0$. We have
$$
\mbox{Coefficient of $z^j$ in LHS\eqn{SPHR1}} =
\sum_{k \ge 0} (-1)^j q^{3k^2 + 3kj + \frac{1}{2}j(j+1) + k}.
$$
On the right side we need $m=3k-j$, and $2k\le m$ so that $k \ge j$,
and we have
\begin{align*}
\mbox{Coefficient of $z^j$ in RHS\eqn{SPHR1}} &=
\sum_{k \ge j} (-1)^j q^{3k^2 - 3kj + \frac{1}{2}j(j-1) + k} \\
&=
\sum_{k \ge 0} (-1)^j q^{3k^2 + 3kj + \frac{1}{2}j(j+1) + k}\\
&=\mbox{Coefficient of $z^j$ in LHS\eqn{SPHR1}},  
\end{align*}
by replacing $k$ by $k+j$ in the first summation. This proves \eqn{SPHR1}.
The proof of \eqn{SPHR2} is similar.
\end{proof}

This proposition leads to a new version of the Hecke-Rogers 
identity \eqn{HR1}.

\begin{cor}
\mylabel{cor:cor1}
\begin{align*}
\prod_{n=1}^\infty(1-q^n)^2 
&= 
2\sum_{m=0}^\infty \left(\sum_{0 \le k < m/3}
(-1)^{m+k} q^{\frac{1}{2}(m^2-3k^2) + \frac{1}{2}(m-k)} \right. \\
&\qquad \left. + \sum_{1 \le k < (m+1)/3}
(-1)^{m+k} q^{\frac{1}{2}(m^2-3k^2) + \frac{1}{2}(m+k)}\right)
+ \sum_{m=0}^\infty q^{3m^2 +m} - \sum_{m=1}^\infty q^{3m^2-m}.
\end{align*}
\end{cor}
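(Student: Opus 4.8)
The plan is to fold the Hecke--Rogers double sum in half about the lines $k=m/3$ and $k=(m+1)/3$, using the two identities of Proposition \propo{prop3} at $z=1$; the two reflected halves double the truncated sums appearing on the right of the claimed identity, and the two fixed lines leave behind exactly the theta-like series $\sum q^{3m^2+m}$ and $-\sum q^{3m^2-m}$.

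Concretely, I would start from the case $z=1$ of \eqn{NEWrankid} (a symmetric form of \eqn{HR1}),
\begin{align*}
\prod_{n=1}^\infty(1-q^n)^2
&= \sum_{m=0}^\infty\sum_{k=0}^{[m/2]}(-1)^{m+k}q^{\frac12(m^2-3k^2)+\frac12(m-k)}\\
&\quad + \sum_{m=0}^\infty\sum_{k=1}^{[m/2]}(-1)^{m+k}q^{\frac12(m^2-3k^2)+\frac12(m+k)}.
\end{align*}
In the first double sum I would partition the range $0\le k\le[m/2]$ into the pieces $0\le k<m/3$, $k=m/3$ (which occurs only when $3\mid m$), and $m/3<k\le m/2$; setting $z=1$ in \eqn{SPHR1} says exactly that the first and third pieces have equal sums, so the first double sum equals $2\sum_{m}\sum_{0\le k<m/3}(-1)^{m+k}q^{\frac12(m^2-3k^2)+\frac12(m-k)}$ plus the contribution of the line $k=m/3$, which on writing $m=3\ell$ has sign $(-1)^{4\ell}=1$ and exponent $3\ell^2+\ell$, hence equals $\sum_{\ell\ge0}q^{3\ell^2+\ell}$. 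In the second double sum I would partition $1\le k\le[m/2]$ at $k=(m+1)/3$ and use \eqn{SPHR2} at $z=1$ in the same way; the surviving line $k=(m+1)/3$, written with $m=3\ell-1$ and $\ell\ge1$, has sign $(-1)^{4\ell-1}=-1$ and exponent $3\ell^2-\ell$, contributing $-\sum_{\ell\ge1}q^{3\ell^2-\ell}$. Adding the two halves gives the stated identity.

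I do not expect a real obstacle here; once Proposition \propo{prop3} is in hand the rest is bookkeeping. The points that need care are getting the signs $(-1)^{m+k}$ and the quadratic exponents correct on the two boundary lines $k=m/3$ and $k=(m+1)/3$, and justifying the rearrangement of terms, for which it suffices to note that $m^2-3k^2\ge m^2/4$ whenever $0\le k\le m/2$, so only finitely many $(m,k)$ contribute to any given power of $q$ and all the manipulations are legitimate coefficient by coefficient.
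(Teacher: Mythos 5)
Your proposal is correct and is essentially the paper's own proof: the paper likewise rewrites \eqn{HR1} in the folded form with the two inner sums over $0\le k\le[m/2]$ and $1\le k\le[m/2]$, and then splits each at the lines $k=m/3$ and $k=(m+1)/3$ using \eqn{SPHR1}--\eqn{SPHR2} with $z=1$; your computation of the boundary contributions $\sum_{\ell\ge0}q^{3\ell^2+\ell}$ and $-\sum_{\ell\ge1}q^{3\ell^2-\ell}$ supplies exactly the bookkeeping the paper leaves implicit. One small point of logical hygiene: cite the starting identity as \eqn{HR1} (folded about $m=0$) rather than as the $z=1$ case of \eqn{NEWrankid}, since the proof of \eqn{NEWrankid} in the paper ultimately relies on this corollary.
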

\begin{remark}
A finite form of this result was found earlier by the author and
Alex Berkovich \cite{Be-Ga03}.
\end{remark}
\begin{proof}
From \eqn{HR1} we have
\beqs
\prod_{n=1}^\infty (1-q^n)^2 = 
\sum_{m=0}^\infty \left(\sum_{k=0}^{[m/2]}
(-1)^{m+k} q^{\frac{1}{2}(m^2-3k^2) + \frac{1}{2}(m-k)} 
+ \sum_{k=1}^{[m/2]}
(-1)^{m+k} q^{\frac{1}{2}(m^2-3k^2) + \frac{1}{2}(m+k)} \right).
\eeqs
The result follows by splitting each of these sums using 
\eqn{SPHR1}--\eqn{SPHR2} with $z=1$.
\end{proof}

We are now ready to prove Theorem \thm{mainthm}.
First we write \eqn{NEWSid} in the following equivalent form
\begin{align}
&(z)_\infty (z^{-1})_\infty (q)_\infty S(z,q) 
\mylabel{eq:EQNEWSid}\\
&= \sum_{n=0}^\infty \left( \sum_{j=0}^{[n/3]} (-1)^{n+j}
(z^{n-3j}  -2 + z^{3j-n}) q^{\frac{1}{2}(n^2-3j^2) + \frac{1}{2}(n-j)}\right)
\nonumber\\
& \qquad + \left. \sum_{j=1}^{[n/3]} (-1)^{n+j}
(z^{n-3j+1}  -2 + z^{3j-n-1}) q^{\frac{1}{2}
(n^2-3j^2) + \frac{1}{2}(n+j)}\right).
\nonumber
\end{align}
We will prove \eqn{NEWSid} by showing that the coefficient of $z^k$ on
both sides agree for each $k$. 
Since both sides are symmetric in $z$, $z^{-1}$ we may assume $k\ge0$.
From \eqn{Szqid2} we have
\begin{align}
(z)_\infty (z^{-1})_\infty (q)_\infty S(z,q)
&=
(z^{-1})_\infty (q)_\infty 
\sum_{n=0}^\infty \frac{(-1)^{n-1} q^{n(n+1)/2}(1-z^n)}
                       {(q)_n (1-z^{-1} q^n)}
\nonumber\\
&= - (q)_\infty^2 + 
(z^{-1})_\infty (q)_\infty 
\sum_{n=0}^\infty \frac{(-1)^{n} z^n q^{n(n+1)/2}}
                       {(q)_n (1-z^{-1} q^n)}
\mylabel{eq:Szqid2b},
\end{align}
by \eqn{FFWid}.
We now calculate the coefficient of $z^k$ in the Laurent series of
\beq
F(z,q) = 
(z^{-1})_\infty  
\sum_{n=0}^\infty \frac{(-1)^{n} z^n q^{n(n+1)/2}}
                       {(q)_n (1-z^{-1} q^n)}
\mylabel{eq:Fdef}.
\eeq
By Andrews \cite[Eq.(2.2.6)]{An-book} we have
\beq               
F(z,q) = \sum_{n=0}^\infty \frac{(-1)^n z^{-n} q^{\frac{1}{2}n(n-1)}}
                                { (q)_n }
\sum_{m=0}^\infty \frac{(-1)^{m} z^m q^{m(m+1)/2}}
                       {(q)_m }
\sum_{N=0}^\infty z^{-N} q^{mN}.
\mylabel{eq:Fid}
\eeq         
The coefficient of $z^k$ in \eqn{Fid} arises
when $-n+m-N=k$. So we let $n=j-N$, $m=j+k$ where $j\ge N \ge 0$ 
and we find that
\begin{align}
\left[z^k\right]\,F(z,q) &= 
(-1)^k \sum_{N=0}^\infty \sum_{j=N}^\infty
\frac{(-1)^N q^{j^2 + jk + \frac{1}{2}N(N+1) + Nk + \frac{1}{2}k(k+1)}}
     {(q)_{j+k} (q)_{j-N} }
\nonumber\\
&= 
(-1)^k q^{\frac{1}{2}k(k+1)} \sum_{j=0}^\infty 
\left(\sum_{N=0}^j
\frac{(-1)^N q^{\frac{1}{2}N(N+1) + Nk}}
     {(q)_{j+k} (q)_{j-N} }
\right)
q^{j^2 + jk}.
\mylabel{eq:cofzkF}
\end{align}
We now apply the Limiting Form of Bailey's Lemma to the Bailey pair 
\eqn{pair1} to obtain the following Bailey pair with parameters $(a,q)$:
\begin{align*}
       \alpha_n'&= a^n q^{n^2} \alpha_n \\
        &=\begin{cases}
         1, & n=0, \\
         q^{2n^2}(a^{2n} q^n - a^{2n-1} q^{-n}), & n\geq1,
       \end{cases}
       \\
       \beta_n'
&= \sum_{j=0}^n \frac{a^j q^{j^2} \beta_j}
                     {(q)_{n-j} }
\\
&= \sum_{j=0}^n \frac{a^j q^{j^2+j}}
                     {(q)_{n-j} (q)_j (aq)_j}
\\
&=\sum_{j=0}^n \frac{(-1)^j a^j q^{j(j+1)/2}}
                  {(q)_{n-j}(aq)_n},
\end{align*}
by \eqn{A1}. Now using this Bailey pair in \eqn{baileypairlimsum} 
with $a=q^k$ we have
\begin{equation*}
(q)_k 
\sum_{j=0}^\infty 
\left(\sum_{n=0}^j
\frac{(-1)^n q^{\frac{1}{2}n(n+1) + nk}}
     {(q)_{j+k} (q)_{j-n} }
\right)
q^{j^2 + jk}
= \frac{1}{(q^{k+1};q)_\infty} 
\left(1 + \sum_{r=1}^\infty q^{3r^2+rk}(q^{2rk+r} - q^{2rk-k-r})\right),
\end{equation*}
where we have used  the fact that
\beq
(q^{k+1};q)_j = \frac{(q)_{j+k}}{(q)_k}.
\mylabel{eq:qfact}
\eeq
Thus we have
\beq
\sum_{j=0}^\infty 
\left(\sum_{n=0}^j
\frac{(-1)^n q^{\frac{1}{2}n(n+1) + nk}}
     {(q)_{j+k} (q)_{j-n} }
\right)
q^{j^2 + jk}
= \frac{1}{(q;q)_\infty} 
\left(\sum_{r=0}^\infty q^{3r^2+3rk+r}
 - \sum_{r=1}^\infty q^{3r^2+3rk-r-k}\right).
\mylabel{eq:niceid}
\eeq

We are now ready to show that the coefficient of $z^k$ on both sides
of equation \eqn{EQNEWSid} agree for all $k\ge0$. 
\subsubsection*{Case 1} $k=0$.
By \eqn{Szqid2b}, \eqn{Fdef}, \eqn{cofzkF}, \eqn{niceid} we have
\begin{align*}
[z^0] \, \LHS(\eqn{EQNEWSid}) &= -(q)_\infty^2 + 
(q)_\infty 
\sum_{j=0}^\infty 
\left(\sum_{n=0}^j
\frac{(-1)^n q^{\frac{1}{2}n(n+1)}}
     {(q)_{j} (q)_{j-n} }
\right)
q^{j^2}
\\
&=-(q)_\infty^2 + \sum_{r=0}^\infty q^{3r^2+r}
 - \sum_{r=1}^\infty q^{3r^2-r}\\
&=
-2\sum_{m=0}^\infty \left(\sum_{0 \le k < m/3}
(-1)^{m+k} q^{\frac{1}{2}(m^2-3k^2) + \frac{1}{2}(m-k)} \right. \\
&\qquad \left. + \sum_{1 \le k < (m+1)/3}
(-1)^{m+k} q^{\frac{1}{2}(m^2-3k^2) + \frac{1}{2}(m+k)}\right)
\qquad\mbox{(by Corollary \corol{cor1})}\\
&=[z^0]\,\RHS(\eqn{EQNEWSid}).
\end{align*}

\subsubsection*{Case 2} $k \ge 1$.
By \eqn{Szqid2b}, \eqn{Fdef}, \eqn{cofzkF}, \eqn{niceid} we have
\begin{align*}
[z^k] \, \LHS(\eqn{EQNEWSid})  
&=
(q)_\infty (-1)^k q^{\frac{1}{2}k(k+1)} \sum_{j=0}^\infty 
\left(\sum_{n=0}^j
\frac{(-1)^n q^{\frac{1}{2}n(n+1) + nk}}
     {(q)_{j+k} (q)_{j-n} }
\right)
q^{j^2 + jk}
\\
&=
(-1)^k q^{\frac{1}{2}k(k+1)} \left(              
\sum_{r=0}^\infty q^{3r^2+3rk+r}
 - \sum_{r=1}^\infty q^{3r^2+3rk-r-k}\right)
\\
&=
\sum_{j=0}^\infty (-1)^k q^{3j^2+3jk+j +\frac{1}{2}k(k+1)}
 + \sum_{j=1}^\infty (-1)^{k-1} q^{3j^2+3jk-j + \frac{1}{2}k(k-1)}
\\
&=[z^k]\,\RHS(\eqn{EQNEWSid}).
\end{align*}
This completes the proof of \eqn{NEWSid}.

If we divide both sides of \eqn{NEWSid} by $(1-z)(1-z^{-1})$ and let
$z\to1$ we obtain
\begin{cor}
\mylabel{cor:maincor}
\beq
\prod_{n=1}^\infty (1-q^n)^3 \sum_{n=1}^\infty\spt(n) q^n
= -\sum_{n=0}^\infty \sum_{m=0}^n \left( \frac{n-m}{2} \right)^2
\leg{-4}{n} \leg{12}{m}
q^{\frac{1}{12}( \frac{3n^2-m^2}{2} - 1)}.
\mylabel{eq:NEWSPTid}
\eeq
\end{cor}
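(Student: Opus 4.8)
The plan is to derive \eqn{NEWSPTid} as the stated limiting case of Theorem~\thm{mainthm}: divide both sides of \eqn{NEWSid} by $(1-z)(1-z^{-1})$ and let $z\to1$, treating the two sides separately and computing each limit term by term.

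For the left side, I would first peel off the singular factors, writing $(z;q)_\infty=(1-z)(zq;q)_\infty$ and $(z^{-1};q)_\infty=(1-z^{-1})(z^{-1}q;q)_\infty$, so that
\[
\frac{(z)_\infty\,(z^{-1})_\infty\,(q)_\infty\,S(z,q)}{(1-z)(1-z^{-1})}=(zq;q)_\infty\,(z^{-1}q;q)_\infty\,(q)_\infty\,S(z,q).
\]
The double series defining $S(z,q)$ has no pole at $z=1$ for $0<|q|<1$ (the poles of its summands lie at $z=q^{\pm j}$, $j\ge1$), so the right-hand side above is analytic near $z=1$; letting $z\to1$ gives $(q;q)_\infty^{3}\,S(1,q)=\prod_{n\ge1}(1-q^{n})^{3}\sum_{n\ge1}\spt(n)\,q^{n}$, which is the left side of \eqn{NEWSPTid}.

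For the right side the key point is that the Kronecker symbols in \eqn{NEWSid} vanish unless $n$ is odd (by $\leg{-4}{n}$) and $m$ is coprime to $12$, hence odd (by $\leg{12}{m}$), so that $\ell:=\tfrac12(n-m)$ is a nonnegative integer in every surviving term. Writing $(1-z)(1-z^{-1})=-z^{-1}(1-z)^{2}$, a generic summand divides as
\[
\frac{\bigl(1-z^{\ell}\bigr)^{2}z^{\frac12(m-n)}}{(1-z)(1-z^{-1})}=-\,z^{\,1-\ell}\left(\frac{1-z^{\ell}}{1-z}\right)^{2}=-\,z^{\,1-\ell}\bigl(1+z+\cdots+z^{\ell-1}\bigr)^{2}\ \xrightarrow[z\to1]{}\ -\ell^{2}=-\left(\frac{n-m}{2}\right)^{2},
\]
so the right side of \eqn{NEWSid} divided by $(1-z)(1-z^{-1})$ converges termwise to the right side of \eqn{NEWSPTid}. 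The only point needing justification is the interchange of the limit $z\to1$ with the infinite sum (and with the product and series on the left); I expect this to be the only real work, and it is routine: for each fixed power of $q$ only finitely many pairs $(n,m)$ contribute to the exponent $\tfrac1{12}\bigl(\tfrac{3n^{2}-m^{2}}{2}-1\bigr)$, so the claimed identity may be verified coefficient by coefficient in $q$, each coefficient being a finite sum of rational functions of $z$ that are regular at $z=1$. The genuine content is entirely contained in Theorem~\thm{mainthm}.
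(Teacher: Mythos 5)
Your proposal is correct and is exactly the route the paper takes: the paper simply asserts that Corollary \corol{maincor} follows from Theorem \thm{mainthm} by dividing \eqn{NEWSid} by $(1-z)(1-z^{-1})$ and letting $z\to1$, and you have supplied the (routine but correct) details — peeling off the $(1-z)$, $(1-z^{-1})$ factors on the left, observing that the Kronecker symbols force $n$ and $m$ odd so $\tfrac12(n-m)\in\mathbb{Z}_{\ge0}$, and computing the termwise limit $-\left(\tfrac{n-m}{2}\right)^2$ via $(1-z)(1-z^{-1})=-z^{-1}(1-z)^2$. The coefficientwise justification of the interchange is also sound, since the exponent of $q$ is at least $\tfrac{1}{12}(n^2-1)$ on the range $0\le m\le n$.
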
    
Define the function $\alpha(n)$ by
\beq
\sum_{n=1}^\infty \alpha(n) q^n = 
\prod_{n=1}(1 - q^{12n})^3 \sum_{n=1}^\infty \spt(n) q^{12n+1},
\mylabel{eq:alphadef}
\eeq
so that $\alpha(n)=0$ if $n$ is not a positive integer congruent to
$1\pmod{12}$. We have
\begin{cor}
\mylabel{cor:heckecong}
Suppose $\ell \equiv \pm5 \pmod{12}$ is prime. Then
\begin{align*}
&\alpha(\ell n)+\ell^2 \alpha(n/\ell)
=0,\qquad\mbox{if $\ell\equiv5\pmod{12}$},\\
&\alpha(\ell n)-\ell^2 \alpha(n/\ell)
=0,\qquad\mbox{if $\ell\equiv-5\pmod{12}$}.  
\end{align*}
\end{cor}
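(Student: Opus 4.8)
The plan is to identify the right-hand side of \eqn{NEWSPTid} as (up to normalization) a Hecke eigenform, and then read off the Hecke relations at primes $\ell$ from the fact that the relevant theta-type series is supported on a single residue class. From \eqn{NEWSPTid} and the definition \eqn{alphadef} of $\alpha(n)$, replacing $q$ by $q^{12}$ and multiplying by $q$, we have
\beq
\sum_{n=1}^\infty \alpha(n) q^n = -\sum_{m,n} \left(\frac{n-m}{2}\right)^2 \leg{-4}{n}\leg{12}{m} q^{\frac{3n^2-m^2}{2}},
\eeq
where the sum is over $n\ge0$, $0\le m\le n$ with $n$ odd and $m\equiv\pm1\pmod{12}$; in particular $\alpha$ is supported on $n\equiv1\pmod{12}$. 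First I would recognize the quadratic form $\frac{1}{2}(3n^2-m^2)$ together with the weight factor $\left(\frac{n-m}{2}\right)^2$ and the two Kronecker characters as producing a \emph{weight $3/2 + $(something)} object; more precisely, the polynomial factor $(n-m)^2$ of degree $2$ in the lattice variables combined with a genus-type indefinite/definite form is the classical signature of a \emph{weight $5/2$ cusp form} (a theta series with a harmonic/spherical polynomial of degree $2$). Alternatively—and this is the cleaner route—I would show directly that $\sum\alpha(n)q^n$ equals, up to a constant, $\eta(24z)$-type building blocks times a theta series, or better yet, recognize it as a \emph{CM newform of weight $5/2$} whose coefficients $\alpha(n)$ vanish unless $n$ is represented by a principal form in an imaginary quadratic order of discriminant $-3$ (explaining the factor $\leg{12}{m}$ and the $\pm5\pmod{12}$ splitting: $\ell\equiv\pm1\pmod{12}$ are the split primes, $\ell\equiv\pm5\pmod{12}$ the inert ones).

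The key steps, in order: (1) rewrite the double sum as a single theta series $\sum_{\mathbf{v}\in\Lambda} P(\mathbf{v}) q^{Q(\mathbf{v})}$ over a rank-$2$ lattice $\Lambda$, with $Q$ a positive binary quadratic form of discriminant related to $-3$ and $P$ a degree-$2$ harmonic polynomial, after completing the change of variables $u=n-m$, $v=\text{(linear combination)}$ that diagonalizes $\frac{1}{2}(3n^2-m^2)=\frac{1}{2}((n-m)^2+2(n-m)m+\cdots)$ — here one must be careful that the constraint $0\le m\le n$ is automatically encoded by the sign pattern of the Kronecker symbols, so the sum is genuinely over a full lattice coset; (2) invoke the classical fact (Shimura, Serre–Stark, or the explicit theory of theta series with spherical harmonics for binary forms, e.g. Hecke) that such a series is a Hecke eigenform in $S_{5/2}(\Gamma_0(576),\chi)$ or the appropriate space, so that its coefficients satisfy the standard Hecke recursion $\alpha(\ell n) = \lambda_\ell\,\alpha(n) - \chi(\ell)\ell^{?}\,\alpha(n/\ell)$ for primes $\ell$; (3) observe that because $\alpha$ is supported on norms from the order of discriminant $-3$, for $\ell$ inert in $\mathbb{Q}(\sqrt{-3})$—i.e. $\ell\equiv 2\pmod 3$, which among residues mod $12$ coprime to $12$ means $\ell\equiv\pm5\pmod{12}$—the eigenvalue $\lambda_\ell$ vanishes, and $\alpha(\ell n)\ne 0$ forces $\ell\mid n$; (4) pin down the sign and the exact power of $\ell$: the weight-$5/2$ normalization gives a factor $\ell^{2}$ (namely $\ell^{2k-2}$ with... the "motivic weight" here being $2$), and the sign $\leg{-4}{\ell}=\pm1$ distinguishing $\ell\equiv5$ from $\ell\equiv-5\pmod{12}$ accounts for the "$+\ell^2$" versus "$-\ell^2$" in the two cases, since the character $\chi$ acting at $\ell$ is exactly $\leg{-4}{\cdot}$ on the relevant component.

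The main obstacle I expect is step (2)/(4): correctly identifying the modular space — the level, the Nebentypus character, and especially the exact weight and hence the exact power of $\ell$ (why $\ell^2$ and not $\ell$ or $\ell^3$). The cleanest way around this is probably to avoid the full theta-series machinery and instead argue at the level of formal $q$-series: use \eqn{NEWSPTid} to show that $\sum\alpha(n)q^n$ is a linear combination of shifts of $\eta(24z)^3$-type series (or of $\sum_{n}n^2\chi(n)q^{n^2}$-type "weight $5/2$ unary" theta series), each of which is a well-understood eigenform of weight $5/2$ with known Hecke action, and then combine. One should also double-check that the polynomial prefactor $\left(\frac{n-m}{2}\right)^2$ really is harmonic with respect to $Q$ (so that the series is cuspidal and an eigenform, not merely a mixed theta series); if it is not exactly harmonic, one decomposes it into a harmonic part plus a lower-degree piece times $Q$ and tracks both — but the numerics in \eqn{S3conj1}, where $a(5n+2)=-25\,a(n/5)$ matches $\ell=5$, $\ell^2=25$, strongly suggest the clean harmonic case holds and that $\ell^2$ is the correct power. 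Once the eigenform identification is in hand, Corollary \ref{cor:heckecong} is immediate from the vanishing of $\lambda_\ell$ at inert primes together with the explicit Nebentypus sign.
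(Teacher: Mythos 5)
Your proposal has a genuine gap, on two counts. First, the arithmetic identification in your step (3) is wrong: the quadratic form in \eqn{NEWSPTid} is $\frac{1}{2}(3n^2-m^2)$, which is \emph{indefinite} (discriminant $12$), so the relevant field is the real quadratic field $\mathbb{Q}(\sqrt{3})$, not the imaginary field $\mathbb{Q}(\sqrt{-3})$. Correspondingly, the condition singling out $\ell\equiv\pm5\pmod{12}$ is that $3$ is a quadratic nonresidue mod $\ell$, i.e.\ $\leg{12}{\ell}=-1$; inertness in $\mathbb{Q}(\sqrt{-3})$ (that is, $\ell\equiv2\pmod 3$) would instead give $\ell\equiv 5,11\pmod{12}$, a different set (note $-5\equiv7$, not $11$, mod $12$). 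Because the form is indefinite, the machinery you invoke (positive definite binary theta series with spherical harmonics, CM newforms, imaginary quadratic orders) does not apply as stated; one would need Hecke-type indefinite theta series instead, and you explicitly leave the eigenform identification, the weight, and the sign as open obstacles, so the argument is incomplete even granting the setup.

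Second, and more importantly, none of that machinery is needed. The paper's proof is elementary and takes three lines: from \eqn{NEWSPTid} one has $\sum_N\alpha(N)q^N=-\sum_{n,m}\left(\frac{n-m}{2}\right)^2\leg{-4}{n}\leg{12}{m}q^{(3n^2-m^2)/2}$, and for $\ell\equiv\pm5\pmod{12}$ the congruence $3n^2-m^2\equiv0\pmod{\ell}$ forces $n\equiv m\equiv0\pmod{\ell}$, precisely because $3$ is a nonresidue mod $\ell$. Substituting $n=\ell n'$, $m=\ell m'$, the polynomial factor contributes $\ell^2$ and the Kronecker symbols factor multiplicatively, giving $\alpha(\ell N)=\leg{-4}{\ell}\leg{12}{\ell}\,\ell^2\,\alpha(N/\ell)$; evaluating $\leg{-4}{\ell}\leg{12}{\ell}$ in the two residue classes yields the two signs. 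This divisibility observation is exactly the ``eigenvalue vanishes at inert primes'' phenomenon you were circling, but it is used directly on the $q$-series, without ever proving (or needing) that the series is a modular form.
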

\begin{proof}
From \eqn{NEWSPTid} we have
\beq
\sum_{n=1}^\infty \alpha(n) q^n 
= -\sum_{n=0}^\infty \sum_{m=0}^n \left( \frac{n-m}{2} \right)^2
\leg{-4}{n} \leg{12}{m}
q^{\frac{3n^2-m^2}{2}}.
\mylabel{eq:NEWSPTidv2}
\eeq
Suppose $\ell$ is prime and $\ell \equiv \pm5\pmod{12}$. Then
we observe that 
$$
3n^2 - j^2 \equiv 0\pmod{\ell} \qquad\mbox{if and only if}\quad
n\equiv j\equiv 0 \pmod{\ell},
$$
since $3$ is quadratic nonresidue mod $\ell$. Hence
$$
\alpha(\ell n) = 
\leg{-4}{\ell} \leg{12}{\ell} 
\ell^2 \alpha(n/\ell),
$$
which gives the result.
\end{proof}

\section{A two-variable Hecke-Rogers identity for the Dyson rank function}
\mylabel{sec:rank}

In this section we prove \eqn{NEWrankid}.
We use the fact that spt-crank function $S(z,q)$ can be written in terms of
the Dyson rank function.
By \cite[Corollary 2.5]{An-Ga-Li12} we have
\beq
S(z,q) = \frac{-1}{(1-z)(1-z^{-1})}
\left( \frac{(q)_\infty}
            {(zq)_\infty (z^{-1}q)_\infty }
- R(z,q) \right),
\mylabel{eq:SRid}
\eeq
so that
\beq
(zq)_\infty (z^{-1}q)_\infty (q)_\infty R(z,q)
=(z)_\infty (z^{-1})_\infty (q)_\infty S(z,q)
+ (q)_\infty^2.
\mylabel{eq:SRids}
\eeq
By \eqn{EQNEWSid} and Proposition \propo{prop3} we have
\begin{align*}
&(zq)_\infty (z^{-1}q)_\infty (q)_\infty R(z,q)\\
&= \sum_{n=0}^\infty \left( \sum_{j=0}^{[n/3]} (-1)^{n+j}
(z^{n-3j}  -2 + z^{3j-n}) q^{\frac{1}{2}(n^2-3j^2) + \frac{1}{2}(n-j)}\right)
\nonumber\\
& \qquad + \left. \sum_{j=1}^{[n/3]} (-1)^{n+j}
(z^{n-3j+1}  -2 + z^{3j-n-1}) q^{\frac{1}{2}
(n^2-3j^2) + \frac{1}{2}(n+j)}\right) + (q)_\infty^2
\nonumber\\
&= \frac{1}{2}\sum_{n=0}^\infty \left( \sum_{j=0}^{[n/2]} (-1)^{n+j}
(z^{n-3j}  -2 + z^{3j-n}) q^{\frac{1}{2}(n^2-3j^2) + \frac{1}{2}(n-j)}\right)
\nonumber\\
& \qquad + \left. \sum_{j=1}^{[n/2]} (-1)^{n+j}
(z^{n-3j+1}  -2 + z^{3j-n-1}) q^{\frac{1}{2}
(n^2-3j^2) + \frac{1}{2}(n+j)}\right) + (q)_\infty^2
\nonumber\\
&= \frac{1}{2}\sum_{n=0}^\infty \left( \sum_{j=0}^{[n/2]} (-1)^{n+j}
(z^{n-3j}  + z^{3j-n}) q^{\frac{1}{2}(n^2-3j^2) + \frac{1}{2}(n-j)}\right)
\nonumber\\
& \qquad + \left. \sum_{j=1}^{[n/2]} (-1)^{n+j}
(z^{n-3j+1} + z^{3j-n-1}) q^{\frac{1}{2}
(n^2-3j^2) + \frac{1}{2}(n+j)}\right),                          
\nonumber
\end{align*}
by \eqn{HR1}.
This completes the proof of \eqn{NEWrankid}.


\section{Two-variable Hecke-Rogers identities for the overpartition rank function}
\mylabel{sec:conj}
In this section we prove \eqn{CONJ1a} and \eqn{CONJ1b}.
First we  prove \eqn{CONJ1a}. 
The other equation \eqn{CONJ1b} we will follow from a transformation of 
Milne \cite{Mi94}.
We need to prove some $q$-hypergeometric identities.

\begin{prop}
\mylabel{propo:propfJTP}
\beq               
(1+z) (z;q)_n (z^{-1};q)_n =
\sum_{j=-n}^{n+1} (-1)^{j+1} 
\frac{(q)_{2n}}
     {(q)_{n+j}(q)_{n-j+1}} (1-q^{2j-1}) z^j q^{\frac{1}{2}j(j-3) + 1}.
\mylabel{eq:fJTPv1}
\eeq
\end{prop}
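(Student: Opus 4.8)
The plan is to deduce \eqn{fJTPv1} from a finite form of the Jacobi triple product, followed by an elementary reconciliation of the coefficients of $z^j$ on the two sides, rather than by a direct expansion.

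The input is the standard finite Jacobi triple product: for every integer $m \ge 0$,
\[
(x;q)_{m+1}(qx^{-1};q)_m = \sum_{j=-m}^{m+1} (-1)^j q^{\binom{j}{2}} \qbin{2m+1}{m+j} x^j ,
\]
where $\qbin{N}{k} = (q)_N/\bigl((q)_k (q)_{N-k}\bigr)$ for $0 \le k \le N$ and $\qbin{N}{k}=0$ otherwise, so that the index ranges take care of themselves; this follows from the terminating $q$-binomial theorem (equivalently, from the symmetric finite triple product together with the $q$-Pascal rule), see \cite{An-book}, \cite{Ga-Ra-book}. The case $n=0$ of \eqn{fJTPv1} is a short direct check (both sides equal $1+z$), so assume $n \ge 1$. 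Using $(z^{-1};q)_n = (1-z^{-1})(qz^{-1};q)_{n-1}$ together with $(1+z)(1-z^{-1}) = z - z^{-1}$, the left-hand side of \eqn{fJTPv1} equals $(z-z^{-1})\,(z;q)_n (qz^{-1};q)_{n-1}$. Applying the finite triple product with $x=z$, $m=n-1$, multiplying through by $z-z^{-1}$, and shifting the summation index, one collects the coefficient of $z^j$; using $\binom{j+1}{2}-\binom{j-1}{2}=2j-1$ and $\qbin{2n-1}{n+j-2}=\qbin{2n-1}{n+1-j}$ this gives
\[
[z^j]\bigl( (1+z)(z;q)_n (z^{-1};q)_n \bigr) = (-1)^{j+1} q^{\binom{j-1}{2}}\Bigl( \qbin{2n-1}{n+1-j} - q^{2j-1}\qbin{2n-1}{n+j} \Bigr).
\]

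Since $\frac{1}{2}j(j-3)+1 = \binom{j-1}{2}$, comparing this with the coefficient of $z^j$ on the right of \eqn{fJTPv1} reduces the entire proposition to the single-variable identity
\[
\qbin{2n-1}{n+1-j} - q^{2j-1}\qbin{2n-1}{n+j} = (1-q^{2j-1})\,\frac{(q)_{2n}}{(q)_{n+j}(q)_{n-j+1}} .
\]
To prove this I would write both Gaussian binomials over the common product $(q)_{n+j}(q)_{n+1-j}$, using $(q)_{n+j} = (q)_{n+j-2}(1-q^{n+j-1})(1-q^{n+j})$ and $(q)_{n+1-j} = (q)_{n-1-j}(1-q^{n-j})(1-q^{n+1-j})$, cancel the common factor $(q)_{2n-1}/\bigl((q)_{n+j}(q)_{n+1-j}\bigr)$, and be left with the polynomial identity
\[
(1-q^{n+j-1})(1-q^{n+j}) - q^{2j-1}(1-q^{n-j})(1-q^{n+1-j}) = (1-q^{2n})(1-q^{2j-1}) ,
\]
which holds because both sides expand to $1 - q^{2n} - q^{2j-1} + q^{2n+2j-1}$.

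I do not expect a conceptual obstacle here: the argument is bookkeeping once the right finite triple product is in hand. The two places that need care are keeping the summation indices (and the $q$-binomial vanishing conventions) straight through the multiplication by $z-z^{-1}$ and the reindexing, and carrying out the final polynomial verification — this last step is the main, though entirely routine, computation. As an alternative one could instead induct on $n$, using $(z;q)_{n+1}(z^{-1};q)_{n+1} = (1-zq^n)(1-z^{-1}q^n)(z;q)_n (z^{-1};q)_n$ and the $q$-Pascal recurrence for the $\qbin{\cdot}{\cdot}$ appearing on the right of \eqn{fJTPv1}, but the route above is shorter and more transparent.
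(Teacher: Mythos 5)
Your proposal is correct and follows essentially the same route as the paper, which derives \eqn{fJTPv1} from the finite Jacobi triple product \eqn{fJTP} ``by a lengthy but straightforward calculation'' without giving details; you have simply carried out that calculation, using the odd-index form $(z;q)_n(qz^{-1};q)_{n-1}=\sum_j(-1)^jq^{\binom{j}{2}}\qbin{2n-1}{n-1+j}z^j$ after the reduction $(1+z)(z;q)_n(z^{-1};q)_n=(z-z^{-1})(z;q)_n(qz^{-1};q)_{n-1}$. The coefficient comparison, the reduction to $(1-q^{n+j-1})(1-q^{n+j})-q^{2j-1}(1-q^{n-j})(1-q^{n+1-j})=(1-q^{2n})(1-q^{2j-1})$, and the boundary cases $j=-n,\,n,\,n+1$ all check out.
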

\begin{proof}
The proof follows from the well-known finite form of the Jacobi triple
product identity \cite[p.49]{An-book}
\beq               
(z;q)_n (z^{-1}q;q)_n =
\sum_{j=-n}^{n} 
     (-1)^j z^j q^{\frac{1}{2}j(j-1)} \qbin{2n}{n+j}_q,
\mylabel{eq:fJTP}
\eeq
by a lengthy but straightforward calculation.
\end{proof}

The proofs of the following proposition and corollary are similar to the 
proofs of
some identities in Chapter 9 of Andrews and Berndt's Volume I
of Ramanujan's Lost Notebook \cite{An-Be-RLNI}.
\begin{prop}
\mylabel{propo:RAML}
\beq
(q)_\infty (zq;q^2)_\infty 
\sum_{n=0}^\infty \frac{(z;q^2)_n}{(zq;q)_n (q)_n}
q^n
= 1 + 2 \sum_{m=1}^\infty (-1)^m z^{m} q^{m^2}.
\mylabel{eq:RAML1}
\eeq             
\end{prop}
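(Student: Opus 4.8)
The plan is to recognize the left-hand side as a ${}_2\phi_1$-type series and evaluate it by a known $q$-summation, or — following the style of Andrews and Berndt referenced just before the statement — by expanding the product $(q)_\infty (zq;q^2)_\infty$ against the series and collecting powers of $q$ and $z$. First I would rewrite the summand: since $(z;q^2)_n$ and $(zq;q)_n$ do not share the same base, I would split $(zq;q)_n = (zq;q^2)_{\lceil n/2\rceil}(zq^2;q^2)_{\lfloor n/2\rfloor}$ (or, more usefully, note $(z;q^2)_n/(zq;q)_n$ telescopes partially because $(z;q^2)_n = (z^{1/2};q)_{?}\cdots$ — better to avoid half-integer powers). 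The cleaner route: observe that $(z;q^2)_n/((zq;q)_n) = (z;q)_n(z;q^2)_n/((zq;q)_n(z;q)_n)$ and that $(z;q)_n(zq;q)_n$ etc.\ can be folded so the series becomes a ${}_1\phi_1$ or a very-well-poised ${}_2\phi_1$ in base $q$. I expect it to reduce to an instance of the $q$-analogue of Kummer's theorem (the Bailey--Daum sum) or of the ${}_1\phi_1$ summation, whose right-hand side is exactly a theta quotient of the shape $(\text{something})/((q)_\infty(zq;q^2)_\infty)$; multiplying through by $(q)_\infty(zq;q^2)_\infty$ then collapses the infinite products and leaves the bilateral-looking theta sum $1 + 2\sum_{m\ge1}(-1)^m z^m q^{m^2}$.

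The key steps, in order, are: (i) manipulate the summand $\dfrac{(z;q^2)_n}{(zq;q)_n(q)_n}q^n$ into standard basic-hypergeometric form, identifying the numerator/denominator parameters and the argument; (ii) apply the appropriate classical summation (Bailey--Daum / $q$-Kummer, or the ${}_1\phi_1(0;-q;q,-z)$-type sum) to get a closed product form for $\sum_{n\ge0}\dfrac{(z;q^2)_n}{(zq;q)_n(q)_n}q^n$; (iii) multiply by $(q)_\infty(zq;q^2)_\infty$ and simplify the resulting infinite product, showing it equals a single theta function; (iv) expand that theta function by the Jacobi triple product to read off $1 + 2\sum_{m=1}^\infty (-1)^m z^m q^{m^2}$. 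An alternative, purely formal, route for step (ii)–(iv) is a direct coefficient comparison: fix the power of $z$, say $z^m$, on both sides, and verify the coefficient (a $q$-series) matches $(-1)^m q^{m^2}\cdot 2$ for $m\ge1$ and the constant term matches; this is the method used in Proposition \propo{prop3} above and requires only Cauchy-type rearrangements plus a finite $q$-binomial identity such as \cite[p.233, (I.12)]{Ga-Ra-book}.

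The main obstacle will be step (i)–(ii): getting the mismatched bases $q$ and $q^2$ in $(z;q^2)_n$ versus $(zq;q)_n$ into a form where a single clean summation theorem applies. The factor $(zq;q^2)_\infty$ out front is the signal that the right massaging should produce a $q$-Kummer-type sum (whose evaluation naturally carries a $(\cdot\,;q^2)_\infty$ in the denominator), so I would aim the algebra toward that. If a direct hypergeometric identification proves awkward, I will fall back on the coefficient-extraction argument, where the only real work is interchanging the order of summation in $(q)_\infty(zq;q^2)_\infty \sum_n \cdots$ and evaluating the inner finite sum; that sum should vanish except when it produces the square exponent $q^{m^2}$, by a sign-reversing involution or a terminating $q$-binomial evaluation. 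Either way the theta identity $\sum_{m\in\mathbb Z}(-1)^m z^m q^{m^2}$ is the target, so once the products cancel the identification is immediate.
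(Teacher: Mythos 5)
Your plan has a structural flaw that would stop it from working: the right-hand side of \eqn{RAML1}, namely $1+2\sum_{m\ge 1}(-1)^m z^m q^{m^2}$, is a \emph{partial} (false) theta function --- the sum runs over $m\ge 1$ only, it contains only nonnegative powers of $z$, and it is not invariant under $z\mapsto z^{-1}$. Such a series has no infinite product representation, so your steps (ii)--(iv) cannot succeed as described: no classical summation theorem (Bailey--Daum/$q$-Kummer, ${}_1\phi_1$, etc.) will evaluate the left side to ``a single theta function,'' and the Jacobi triple product cannot be used to ``read off'' the answer, since JTP produces bilateral sums $\sum_{m\in\mathbb Z}$. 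The standard device for producing one-sided theta sums of exactly this shape is the Rogers--Fine identity, and that is the engine of the paper's proof: after a Heine transformation the left side becomes $\sum_{n\ge 0}\frac{(-z;q)_{n+1}}{(zq;q)_n}(-z)^n$, and Rogers--Fine with $\alpha=-zq$, $\beta=zq$, $\tau=-z$ telescopes this to $\sum_{n\ge 0}(-1)^n z^{2n}(1-z^2q^{2n+1})q^{n^2}=1+2\sum_{n\ge 1}(-1)^n z^{2n}q^{n^2}$. The asymmetry between the coefficient $1$ at $m=0$ and $2$ at $m\ge 1$ is the fingerprint of this telescoping and is not something a product formula can deliver.

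You also backed away from the correct resolution of the mixed-base problem in step (i). The paper does not split $(zq;q)_n$; it proves the identity with $z$ replaced by $z^2$, so that $(z^2;q^2)_n=(z;q)_n(-z;q)_n$ splits cleanly into base-$q$ factors with no half-integer powers, and only at the very end renames $z^2$ as $z$. You noticed the factorization involving $z^{1/2}$ and discarded it; the substitution $z\to z^2$ is precisely how one keeps that idea and stays in integer powers. With that substitution the sum is a genuine ${}_2\phi_1(z,-z;z^2q;q,q)$, Heine's transformation \cite[(III.1)]{Ga-Ra-book} applies, the infinite products cancel against the prefactor $(q)_\infty(z^2q;q^2)_\infty$, and Rogers--Fine finishes. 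Your fallback coefficient-comparison route is not a priori hopeless, but as written it does not identify the mechanism producing $q^{m^2}$ with the $1$-versus-$2$ coefficient split, and the ``sign-reversing involution'' you invoke would in effect have to reprove the Rogers--Fine telescoping; as it stands the proposal does not contain a workable proof.
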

\begin{proof}
In this proof we will need the Rogers-Fine identity \cite[(9.1.1)]{An-Be-RLNI}
\beq
\sum_{n=0}^\infty \frac{(\alpha;q)_n}
                       {(\beta;q)_n} \tau^n
=
\sum_{n=0}^\infty
\frac{(\alpha;q)_n (\alpha \tau q/\beta;q)_n \beta^n \tau^n q^{n^2-n}
(1 - \alpha\tau q^{2n})}
{(\beta;q)_n (\tau;q)_{n+1}}.
\mylabel{eq:ROGFIN}
\eeq
We will prove \eqn{RAML1} with $z$ replaced by $z^2$.
Applying Heine's transformation \cite[(III.1)]{Ga-Ra-book}
with $a=z$, $b=-z$, $c=z^2q$ and $z\mapsto q$ we obtain
\begin{align}
(q)_\infty (z^2q;q^2)_\infty 
\sum_{n=0}^\infty \frac{(z^2;q^2)_n}{(z^2q;q)_n (q)_n} q^n
&=
(q)_\infty (z^2q;q^2)_\infty 
\sum_{n=0}^\infty \frac{(z;q)_n (-z;q)_n}{(z^2q;q)_n (q)_n} q^n
\nonumber\\
&=
(q)_\infty (z^2q;q^2)_\infty 
\frac{ (-z;q)_\infty (zq;q)_\infty}
     {(z^2q;q)_\infty (q;q)_\infty}
     \sum_{n=0}^\infty \frac{(-zq;q)_n}
                     {(zq;q)_n} (-z)^n
\mylabel{eq:Heine1}\\
&=
     \sum_{n=0}^\infty \frac{(-z;q)_{n+1}}
                     {(zq;q)_n} (-z)^n.
\nonumber           
\end{align}
In \eqn{ROGFIN} we let $\alpha=-zq$, $\beta=zq$, and $\tau=-z$ to obtain
\beqs
\sum_{n=0}^\infty \frac{(-zq;q)_n}
                       {(zq;q)_n} (-z)^n
=
\sum_{n=0}^\infty
\frac{(-zq;q)_n (zq;q)_n (-1)^n z^{2n}(1-z^2q^{2n+1})q^{n^2} }
{(zq;q)_n (-z;q)_{n+1}}
\eeqs
so that 
\begin{align}
\sum_{n=0}^\infty \frac{(-z;q)_{n+1}}
                       {(zq;q)_n} (-z)^n
&=
\sum_{n=0}^\infty
(-1)^n z^{2n}(1-z^2q^{2n+1})q^{n^2}
\nonumber\\
&=
1 + 2 \sum_{n=1}^\infty (-1)^n z^{2n} q^{n^2}.
\mylabel{eq:falseT1}
\end{align}
The result follows from \eqn{Heine1} and \eqn{falseT1} by replacing $z^2$ by $z$.
\end{proof}

Proposition \propo{RAML} gives some nice false theta function identities.
\begin{cor}
\begin{align}
\sum_{n=0}^\infty \frac{(-z;q)_{n+1}}
                       {(zq;q)_n} (-z)^n
&=
1 + 2 \sum_{n=1}^\infty (-1)^n z^{2n} q^{n^2}
\mylabel{eq:falseT1a}\\
\sum_{n=0}^\infty \frac{(z;q^2)_{n+1} (q;q^2)_n}{(-zq;q)_{2n+1}}z^n
&= 1 + 2 \sum_{j=1}^\infty (-1)^j z^{j} q^{j^2}.
\mylabel{eq:falseT2}
\end{align}
\end{cor}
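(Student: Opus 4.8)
The plan is to observe that equation \eqn{falseT1a} is nothing more than \eqn{falseT1}, which was already established inside the proof of Proposition~\propo{RAML}; so the only real content is \eqn{falseT2}. First I would take the third member of the chain of equalities in \eqn{Heine1}, namely
\[
(q)_\infty (z^2q;q^2)_\infty \sum_{n=0}^\infty \frac{(z^2;q^2)_n}{(z^2q;q)_n (q)_n} q^n
= \sum_{n=0}^\infty \frac{(-z;q)_{n+1}}{(zq;q)_n}(-z)^n,
\]
combine it with \eqn{falseT1} to get $(q)_\infty (z^2q;q^2)_\infty \sum_n \frac{(z^2;q^2)_n}{(z^2q;q)_n(q)_n}q^n = 1 + 2\sum_{n\ge1}(-1)^n z^{2n}q^{n^2}$, and then the strategy is to massage the left-hand series until it matches the series on the left of \eqn{falseT2} after the substitution $z^2 \mapsto z$ (equivalently, work with the variable $z$ on the left of \eqn{falseT2} being the square of the $z$ in \eqn{RAML1}). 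The key is to split the even $q$-Pochhammer symbols: write $(z^2;q^2)_n = (z;q)_n(-z;q)_n$ and $(z^2q;q)_{?}$-type factors into products over the two residue classes mod something, so that the bilateral-looking factor $(q;q^2)_n$ and the odd-index Pochhammer $(-zq;q)_{2n+1}$ appearing in \eqn{falseT2} emerge naturally.

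Concretely, the main manipulation I expect to need is to rewrite
\[
\frac{(z^2;q^2)_n}{(z^2q;q)_n(q)_n}
\]
by pairing $(q)_\infty$ and $(z^2q;q^2)_\infty$ from the prefactor into the summand: $(q)_\infty = (q;q^2)_\infty (q^2;q^2)_\infty$ and $(z^2q;q^2)_\infty/(z^2q;q)_n$-type cancellations will convert the base-$q$ Pochhammer $(z^2q;q)_n$ into a base-$q^2$ object times a finite correction. After regrouping, the $n$-th term should acquire exactly the shape $\dfrac{(z^2;q^2)_{n+1}(q;q^2)_n}{(-z^2q;q)_{2n+1}}\,z^{2n}$ up to checking a finite product identity; then replacing $z^2$ by $z$ gives \eqn{falseT2}. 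I would verify the resulting finite-product identity
\[
\frac{(z;q)_{n+1}}{(z^2q;q)_n} \;=\; \frac{(z;q^2)_{n+1}}{(-zq;q)_{2n+1}} \cdot (\text{elementary factor})
\]
by expanding both sides with $(z^2q;q)_{2n+1} = (zq;q^2)_{n}(-zq;q^2)_{n}\cdots$ — i.e. separating each base-$q$ Pochhammer of even length into its two base-$q^2$ halves — which is routine once the bookkeeping of indices is set up.

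The main obstacle will be precisely this index bookkeeping: matching $(-zq;q)_{2n+1}$ (an odd-length, base-$q$ symbol) against the base-$q^2$ symbols that naturally arise, and making sure the extra single factor $(1+z)$ or $(1-zq^{n+1})$ that falls out of the length-parity mismatch is absorbed correctly into $(z;q^2)_{n+1}$ versus $(z;q^2)_n$. I would handle this by carefully tracking the substitution $z \mapsto z^2$ throughout \eqn{Heine1}--\eqn{falseT1} and only at the very end collapsing back to the variable of \eqn{falseT2}; no deep idea is needed beyond Heine's transformation and the Rogers–Fine identity already invoked in Proposition~\propo{RAML}, together with the elementary splitting $(a;q)_{2n} = (a;q^2)_n (aq;q^2)_n$.
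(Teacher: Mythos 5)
Your handling of \eqn{falseT1a} is exactly what the paper does (it is just \eqn{falseT1}), and your overall target for \eqn{falseT2} --- reduce it to the series $(q)_\infty (zq;q^2)_\infty \sum_n \frac{(z;q^2)_n}{(zq;q)_n(q)_n}q^n$ evaluated in Proposition \propo{RAML} --- is also the paper's target. But the bridge you propose does not exist. You claim that after regrouping the infinite products into the summand, the $n$-th term of $(q)_\infty (z^2q;q^2)_\infty \sum_n \frac{(z^2;q^2)_n}{(z^2q;q)_n(q)_n}q^n$ should ``acquire exactly the shape'' $\frac{(z^2;q^2)_{n+1}(q;q^2)_n}{(-z^2q;q)_{2n+1}}z^{2n}$ up to a finite product identity. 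That cannot happen termwise: on one side the summation index appears as the exponent of $q$ (the terms carry $q^n$), on the other as the exponent of $z$ (the terms carry $z^{2n}$), and no amount of splitting Pochhammer symbols via $(a;q)_{2n}=(a;q^2)_n(aq;q^2)_n$ will convert a $q^n$ into a $z^{2n}$. The two series agree only as analytic functions after a genuine rearrangement of terms, not term by term, so the ``elementary finite-product identity'' you intend to verify is vacuous.

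The missing ingredient is a quadratic transformation between base-$q$ and base-$q^2$ series. The paper invokes Andrews' transformation \eqn{An66} (from his 1966 paper), specialized with $b=q$, $c=-zq^2$, $t=z$, $a=zq^2$, to obtain
\begin{equation*}
\sum_{n=0}^\infty \frac{(z;q^2)_{n+1} (q;q^2)_n}{(-zq;q)_{2n+1}}\,z^n
=(q)_\infty (zq;q^2)_\infty
\sum_{n=0}^\infty \frac{(z;q^2)_n}{(zq;q)_n (q)_n}\,q^n ,
\end{equation*}
which is \eqn{falseT2a}; only then does \eqn{RAML1} finish the job. This transformation is not a consequence of Heine's transformation plus Rogers--Fine plus Pochhammer bookkeeping in any routine way, so your proposal as written has a genuine gap at its central step. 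To repair it you would need to either import \eqn{An66} (or an equivalent quadratic transformation) or supply an independent proof of \eqn{falseT2a}.
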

\begin{proof}
Equation \eqn{falseT1a} is \eqn{falseT1}. To prove \eqn{falseT2}
we need
the following transformation due to Andrews \cite[p.67]{An66}
\beq
\sum_{n=0}^\infty \frac{(a;q^2)_n (b;q)_{2n}}
                       {(q^2;q^2)_n (c;q)_{2n}} t^n
=
\frac{ (b;q)_\infty (at;q^2)_\infty}
     { (c;q)_\infty (t;q^2)_\infty}
\sum_{n=0}^\infty \frac{(c/b;q)_n (t;q^2)_n}
                       {(q;q)_n (at;q^2)_n} b^n.
\mylabel{eq:An66}
\eeq
We let $b=q$, $c=-zq^2$, $t=z$, and $a=zq^2$ in \eqn{An66} to obtain
\beqs
\sum_{n=0}^\infty \frac{(zq^2;q^2)_n (q;q)_{2n}}
                       {(q^2;q^2)_n (-zq^2;q)_{2n}} z^n
=
\frac{ (q;q)_\infty (z^2q^2;q^2)_\infty}
     { (-zq^2;q)_\infty (z;q^2)_\infty}
\sum_{n=0}^\infty \frac{(-zq;q)_n (z;q^2)_n}
                       {(q;q)_n (z^2q^2;q^2)_n} b^n,
\eeqs
\beqs              
\sum_{n=0}^\infty \frac{(zq^2;q^2)_n (q;q^2)_{n}}
                       {(-zq^2;q)_{2n}} z^n
=
\frac{1+zq}{1-z}\,\frac{ (q;q)_\infty (z^2q^2;q^2)_\infty (zq;q)_\infty}
     { (z^2q^2;q^2)_\infty (zq^2;q^2)_\infty}
\sum_{n=0}^\infty \frac{(-zq;q)_n (z;q^2)_n}
                       {(q;q)_n (z^2q^2;q^2)_n} q^n,
\eeqs
\beq
\sum_{n=0}^\infty \frac{(z;q^2)_{n+1} (q;q^2)_n}{(-zq;q)_{2n+1}}z^n
=(q)_\infty (zq;q^2)_\infty 
\sum_{n=0}^\infty \frac{(z;q^2)_n}{(zq;q)_n (q)_n}
q^n.
\mylabel{eq:falseT2a}
\eeq           
Equation \eqn{falseT2} follows from \eqn{falseT2a} and 
\eqn{RAML1}.
\end{proof}

Now we are ready to complete the proof of \eqn{CONJ1a}. As usual  we prove that 
coefficient of $z^k$ on both sides agrees for each $k$.
Let
\begin{align}
\LHS\eqn{CONJ1a}=L(z) &= \sum_{k=-\infty}^\infty \ell_k(q) z^k,
\mylabel{eq:Ldef}\\
\RHS\eqn{CONJ1a}=R(z) &= \sum_{k=-\infty}^\infty r_k(q) z^k.
\mylabel{eq:Rightdef}
\end{align}                             
We see that
$$
L(z) = z\,L(z^{-1}), \qquad R(z) = z\,R(z^{-1}),
$$
so that
$$
\ell_k(q) = \ell_{1-k}(q), \qquad 
r_k(q) = r_{1-k}(q),                
$$
for all $k$. Therefore we may assume that $k\ge0$.
We let $a=\rho^{-1}$, $b=q$, $c=-1$, $d=zq$, $e=z^{-1}q$ in
\cite[(III.10)]{Ga-Ra-book}:
\beq
\qbasc{\rho^{-1}}{q}{-1}{q}{-\rho q}{zq}{z^{-1}q}
= 
\frac{(q, -\rho q, -q; q)_\infty}
     {(zq, z^{-1}q, -\rho q; q)_\infty}
\qbasc{z}{z^{-1}}{-\rho q}{q}{q}{\rho q}{-q}.
\mylabel{eq:III10}
\eeq
We let $\rho\to0^+$ and multiply both sides by 
$(q)_\infty (zq)_\infty (z^{-1}q)_\infty$ to find that
\beq            
(1+z)(zq)_\infty (z^{-1}q)_\infty (q)_\infty 
\sum_{n=0}^\infty \frac{(-1)_n q^{\frac{1}{2}n(n+1)}}{(zq)_n (z^{-1}q)_n} 
= (1+z) (q^2;q^2)_\infty (q)_\infty 
\sum_{n=0}^\infty \frac{(z)_n (z^{-1})_n}
                       {(q^2;q^2)_n} q^n.
\mylabel{eq:CONJ1s1}
\eeq               
From \eqn{fJTPv1} and \eqn{CONJ1s1} we have
\begin{align*}
[z^k] L(z) 
&= (q)_\infty (q^2;q^2)_\infty 
\sum_{n=k-1}^\infty \frac{(-1)^{k+1} (q;q)_{2n} (1 - q^{2k-1}) 
q^{n + \frac{1}{2}k(k-3)+1}}
{(q)_{n+k} (q)_{n-k+1} (q^2;q^2)_n}
\\
&= (q)_\infty (q^2;q^2)_\infty 
\sum_{n=0}^\infty \frac{(-1)^{k+1} (q;q^2)_{n+k-1} (1 - q^{2k-1}) 
q^{n + \frac{1}{2}k(k-1)}}
{(q)_{n+2k-1} (q)_{n-k+1}}
\\
&= 
(q)_\infty (q^2;q^2)_\infty 
\frac{ (q;q^2)_\infty (q^{2k};q)_\infty}
{(q^{2k-1};q^2)_\infty (q)_\infty}
\sum_{n=0}^\infty \frac{(-1)^{k+1} (q^{2k-1};q^2)_{n} (1 - q^{2k-1}) 
q^{n + \frac{1}{2}k(k-1)}}
{(q^{2k};q)_{n} (q)_{n}}
\\
&= 
(q)_\infty (q^{2k};q^2)_\infty 
\sum_{n=0}^\infty \frac{(-1)^{k+1} (q^{2k-1};q^2)_{n}  
q^{n + \frac{1}{2}k(k-1)}}
{(q^{2k};q)_{n} (q)_{n}}
\\
&= (-1)^{k+1} q^{\frac{1}{2}k(k-1)} 
 +  2 \sum_{m=1}^\infty (-1)^{m+k+1} q^{m^2 + (2k-1)m + \frac{1}{2}k(k-1)}
\\
&\hskip 2in\mbox{(by letting $z=q^{2k-1}$ in \eqn{RAML1})}
\\
&= [z^k] R(z),
\end{align*}
as required. This completes the proof of \eqn{CONJ1a}.

We describe Milne's \cite{Mi94} bijective proof that
\beq
\sum_{n=0}^\infty 
\sum_{m=-[n/2]}^{[n/2]} (-1)^{n+m}  
q^{\frac{1}{2}(n^2-2m^2) + \frac{1}{2}n}
= 
\sum_{n=0}^\infty 
 \sum_{m=-[n/3]}^{[n/3]} (-1)^{n}
 q^{\frac{1}{2}(n^2-8m^2) + \frac{1}{2}n}.
\mylabel{eq:MILid}
\eeq
Let
\begin{align*}
\mathcal{S}_1 &= \{ (m,n)\in\mathbb{Z}\times\mathbb{Z}\,:\, n \ge 2\abs{m}\},
\\
\mathcal{S}_2 &= \{ (m,n)\in\mathbb{Z}\times\mathbb{Z}\,:\, n \ge 3\abs{m}\}.
\end{align*}
Define
$$
T \,:\, \mathcal{S}_1 \longrightarrow \mathcal{S}_2
$$
by
$$
T(m,n) = 
\begin{cases}
(\frac{1}{2}m,n) &\mbox{if $m\ge0$ is even},\\
(n-\frac{3}{2}m +\frac{1}{2},3n -4m + 1) &\mbox{if $m\ge1$ is odd},\\
\end{cases}
$$
and
$$
T(-m,n) = (-m_1, n_1) \qquad\mbox{if $T(m,n) = (m_1,n_1)$}.
$$
Milne proved \eqn{MILid} by showing that $T$ is a bijection that satisfies
$$
Q_2(T(m,n)) = Q_1(m,n),
$$
where
\begin{align*}
Q_1(m,n) &= \frac{1}{2}n^2 - m^2 + \frac{1}{2}n,
\\
Q_2(m,n) &= \frac{1}{2}n^2 - 4m^2 + \frac{1}{2}n.
\end{align*}
The same bijection proves that the right sides of \eqn{CONJ1a}
and \eqn{CONJ1b} are equal, since it is not difficult to show
that the transformation $T$ also satisfies
$$
L_{2,1}(T(m,n)) =
\begin{cases}
L_{1,1}(m,n) &\mbox{if $m$ is even}\\
L_{1,2}(m,n) &\mbox{if $m$ is odd}, 
\end{cases}
$$
$$
L_{2,2}(T(m,n)) =
\begin{cases}
L_{1,2}(m,n) &\mbox{if $m$ is even}\\
L_{1,1}(m,n) &\mbox{if $m$ is odd}, 
\end{cases}
$$
where
\begin{align*}
L_{1,1}(m,n) &= n - 2\abs{m} + 1, \\
L_{1,2}(m,n) &= 2\abs{m} - n, \\
L_{2,1}(m,n) &= n - 4\abs{m} + 1, \\
L_{2,2}(m,n) &= 4\abs{m} - n, \\
\end{align*}
and
$$
S_2(T(m,n)) \equiv S_1(m,n) \pmod{2},
$$
where
\begin{align*}
S_1(m,n) &= m + n,\\
S_2(m,n) &= n.
\end{align*}
This completes the proof of \eqn{CONJ1a} and \eqn{CONJ1b}.

\section{A two-variable Hecke-Rogers identity for the $M2$-rank function}
\mylabel{sec:conj2}
In this section we prove \eqn{CONJ2}. First we need a result similar to
Proposition \propo{RAML}.

\begin{prop}
\mylabel{propo:RAMLA}
\beq
\frac{(zq;q)_\infty}
     {(-q;q)_\infty}
\sum_{n=0}^\infty \frac{(-zq;q)_{2n} (-1)^n z^n q^n}
{(z^2q^2;q^2)_n (q^2;q^2)_n} 
= \sum_{m=0}^\infty (-1)^m z^{m} q^{\frac{1}{2}m(m+1)}.
\mylabel{eq:RAML1A}
\eeq             
\end{prop}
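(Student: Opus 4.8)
The plan is to follow the template of the proof of Proposition~\propo{RAML}: rewrite the left-hand sum as a basic hypergeometric series, apply a chain of Heine-type and quadratic transformations to bring it to a form $\sum_{n}(\alpha;q)_n/(\beta;q)_n\,\tau^n$, and then invoke the Rogers--Fine identity \eqn{ROGFIN} to collapse it into the asserted false (partial) theta function $\sum_{m\ge0}(-1)^mz^mq^{m(m+1)/2}$.

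First I would recast the summand. Using $(z^2q^2;q^2)_n=(zq;q)_n(-zq;q)_n$ and the splittings $(-zq;q)_{2n}=(-zq;q^2)_n(-zq^2;q^2)_n$ and $(q^2;q^2)_n=(q;q)_n(-q;q)_n$, together with $(-1)^nz^nq^n=(-zq)^n$, the sum $\sum_{n\ge0}(-zq;q)_{2n}(-1)^nz^nq^n/\big((z^2q^2;q^2)_n(q^2;q^2)_n\big)$ becomes a ${}_2\phi_1$ with base $q^2$; alternatively, after one Heine step and the regrouping $(-z;q^2)_n(-zq;q^2)_n=(-z;q)_{2n}$, it takes exactly the shape to which Andrews' quadratic transformation \eqn{An66} applies --- the same transformation used above to derive \eqn{falseT2}.

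Next I would run \eqn{An66} (with parameters chosen analogously to the proof of \eqn{falseT2}, e.g.\ with $b=q$ and suitable $a,c,t$) together with one or two applications of Heine's transformation at base $q^2$; the goal is to push all the ``doubled'' pochhammers onto the infinite-product side --- where they must telescope to precisely $(zq;q)_\infty/(-q;q)_\infty$ --- while leaving a bare sum of the type $\sum_n(\alpha;q)_n/(\beta;q)_n\,\tau^n$. I would then apply \eqn{ROGFIN} to that sum, choosing the parameters so that $(\alpha\tau q/\beta;q)_n$ cancels $(\beta;q)_n$ and the factor $1-\alpha\tau q^{2n}$ produces a telescoping; as in the passage from \eqn{ROGFIN} to \eqn{falseT1}, the surviving terms (after an innocuous rearrangement of a ``$1+2\sum$''-type series, possibly preceded by multiplying through by a factor such as $(1+z)$ and simplifying) reassemble into $\sum_{m\ge0}(-1)^mz^mq^{m(m+1)/2}$.

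The hard part will be the bookkeeping across the two bases $q$ and $q^2$: a single Heine transformation does not suffice, so the transformations have to be chained in the correct order and with the right parameters, and one must check both that the residual infinite products collapse to exactly $(zq;q)_\infty/(-q;q)_\infty$ and that the Rogers--Fine output telescopes cleanly to the stated partial theta function rather than to a spurious variant.
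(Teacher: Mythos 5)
Your overall skeleton --- split the Pochhammer symbols so the sum becomes a single ${}_2\phi_1$ in base $q^2$, apply one Heine transformation, and finish with a Rogers--Fine evaluation of the resulting false theta series --- is essentially the paper's proof: the paper applies Heine's transformation (III.2) of Gasper--Rahman at base $q^2$, with $a=-zq^2$, $b=-zq$, $c=z^2q^2$ and argument $q$, to reach $\sum_{n\ge0}(-zq;q^2)_n(-zq)^n/(-zq^2;q^2)_n$ as in \eqn{RAML1B}, and then quotes Entry 9.3.1 of the Lost Notebook, which is precisely the Rogers--Fine computation you describe. The detour through Andrews' quadratic transformation \eqn{An66} is unnecessary: after the splittings $(-zq;q)_{2n}=(-zq;q^2)_n(-zq^2;q^2)_n$ and $(z^2q^2;q^2)_n=(zq;q)_n(-zq;q)_n$ everything already lives at base $q^2$, so there is no ``two bases'' bookkeeping problem, and a single Heine step lands on the target series.

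The genuine problem is the one concrete parameter you do commit to, namely reading the argument of the ${}_2\phi_1$ as $\tau=-zq$ via $(-1)^nz^nq^n=(-zq)^n$. With that choice one checks $ab\tau/c=a$, $b\tau=c$ and $c/b=\tau$, so (III.2) maps the series to itself with prefactor $1$, and no chain of Heine steps starting there will manufacture the products $(zq;q)_\infty/(-q;q)_\infty$. Indeed, the identity with the factor $(-1)^nz^nq^n$ fails already at order $q$: the left side expands as $1-(1+2z)q+O(q^2)$ while the right side is $1-zq+O(q^2)$. The identity the paper actually proves, and later uses via the specialization $z=q^{2k}$ in the proof of \eqn{CONJ2}, has plain $q^n$ in the summand, i.e.\ the ${}_2\phi_1$ has argument $q$, not $-zq$. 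Once that is corrected, your plan collapses to the paper's two steps: Heine (III.2) at base $q^2$ with argument $q$, whose infinite products simplify to $(-q;q)_\infty/(zq;q)_\infty$ using $(q;q^2)_\infty=1/(-q;q)_\infty$, followed by Rogers--Fine \eqn{ROGFIN} at base $q^2$ with $\alpha=\tau=-zq$ and $\beta=-zq^2$, so that $\alpha\tau q^2/\beta=\beta$ and the $n$th term telescopes to $z^{2n}q^{2n^2+n}(1-zq^{2n+1})$, reassembling into $\sum_{m\ge0}(-1)^mz^mq^{\frac12 m(m+1)}$.
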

\begin{proof}
We apply Heine's transformation \cite[(III.2)]{Ga-Ra-book}
with $a=-zq^2$, $b=-zq$, $c=z^2q^2$, $q\mapsto q^2$ and $z\mapsto q$ to obtain
\beq
\frac{(zq;q)_\infty}
     {(-q;q)_\infty}
\sum_{n=0}^\infty \frac{(-zq;q)_{2n} (-1)^n z^n q^n}
{(z^2q^2;q^2)_n (q^2;q^2)_n} 
= \sum_{n=0}^\infty \frac{(-zq;q^2)_n (-zq)^n}
                         {(-zq^2;q^2)_n},
\mylabel{eq:RAML1B}
\eeq             
after some simplification. The result \eqn{RAML1A} now follows from
\beq
\sum_{n=0}^\infty \frac{(-zq;q^2)_n (-zq)^n}
                         {(-zq^2;q^2)_n}
= 
\sum_{n=0}^\infty (-1)^n z^{n} q^{\frac{1}{2}n(n+1)},
\eeq
which is Entry 9.3.1 in Ramanujan's Lost Notebook 
\cite[Eq.(9.3.1),p.227]{An-Be-RLNI}.
\end{proof}

Now we are ready to complete the proof of \eqn{CONJ2}.
It is clear that the coefficient of $z^k$ on the left side of \eqn{CONJ2}
equals the coefficient of $z^{-k}$. We see that the same is true for the right
side after we rewrite it as
\begin{equation*}
 \sum_{n=0}^\infty (-1)^n q^{\frac{1}{2}n(n+1)} 
  + \sum_{n=1}^\infty \sum_{m=0}^{n-1}
    (-1)^n (z^{n-m} + z^{m-n}) 
    q^{\frac{1}{2}(2n^2-m^2) + \frac{1}{2}(2n-m)}.
\end{equation*}
Thus we may assume that $k\ge0$.
We let $q\to q^2$, $a=\rho^{-1}$, $b=q$, $c=q^2$, $d=zq^2$, $e=z^{-1}q^2$ in
\cite[(III.10)]{Ga-Ra-book}:
\beq
\qbasc{\rho^{-1}}{q}{q^2}{q^2}{\rho q}{zq^2}{z^{-1}q^2}
= 
\frac{(q, \rho q^3, q; q^2)_\infty}
     {(zq^2, z^{-1}q^2, \rho q; q^2)_\infty}
\qbasc{zq}{z^{-1}q}{\rho q}{q^2}{q}{\rho q^3}{q}.
\mylabel{eq:III10b}
\eeq
We let $\rho\to0^+$ and multiply both sides by 
$(q^2;q^2)_\infty (zq^2;q^2)_\infty (z^{-1}q^2;q^2)_\infty$ to find that
\beq            
(zq^2;q^2)_\infty (z^{-1}q^2;q^2)_\infty (q^2;q^2)_\infty 
\sum_{n=0}^\infty \frac{(-1)^n (q;q^2)_n q^{n^2}}
                       {(zq^2;q^2)_n (z^{-1}q^2;q^2)_n} 
= \frac{(q)_\infty}{(-q)_\infty}
\sum_{n=0}^\infty \frac{(zq;q^2)_n (z^{-1}q;q^2)_n}
                       {(q;q^2)_n (q^2;q^2)_n} q^n.
\mylabel{eq:CONJ2s1}
\eeq               
In \eqn{fJTP} we let $q\to q^2$, $z\to zq$ to obtain
\beq               
(zq;q^2)_n (z^{-1}q;q^2)_n =
\sum_{k=-n}^{n} (-1)^{k} 
     z^k q^{k^2} \qbin{2n}{n+k}_{q^2}.
\mylabel{eq:fJTP2}
\eeq
From \eqn{fJTP2} and \eqn{CONJ2s1} we have
\begin{align*}
[z^k] \LHS\eqn{CONJ2} 
&= 
\frac{(q)_\infty}
        {(-q)_\infty}
\sum_{n=k}^\infty \frac{(-1)^{k} (q^2;q^2)_{2n} q^{n + k^2}}
{(q^2;q^2)_{n+k} (q^2;q^2)_{n-k} (q;q)_{2n}}
\\
&= 
\frac{(q)_\infty}
        {(-q)_\infty}
\sum_{n=0}^\infty \frac{(-1)^{k} (-q;q)_{2n+2k} q^{n + k^2+k}}
{(q^2;q^2)_{n+2k} (q^2;q^2)_{n}}
\\
&= 
\frac{(q^{4k+2};q^2)_\infty}
        {(-q;q)_\infty (-q^{2k+1};q)_\infty}
\sum_{n=0}^\infty \frac{(-1)^{k} (-q^{2k+1};q)_{2n} q^{n + k^2+k}}
{(q^{4k+2};q^2)_{n} (q^2;q^2)_{n}}
\\
&= 
\frac{(q^{2k+1};q)_\infty}
        {(-q;q)_\infty }
\sum_{n=0}^\infty \frac{(-1)^{k} (-q^{2k+1};q)_{2n} q^{n + k^2+k}}
{(q^{4k+2};q^2)_{n} (q^2;q^2)_{n}}
\\
&= 
  \sum_{m=0}^\infty (-1)^{m+k} q^{\frac{1}{2}m(m+1) + (2m+1)k + k^2}
\\
&\hskip 2in\mbox{(by letting $z=q^{2k}$ in \eqn{RAML1A})}
\\
&= [z^k] \RHS\eqn{CONJ2}
\end{align*}
as required. This completes the proof of \eqn{CONJ2}.
\section{Two-variable Hecke-Rogers identities for other spt-crank functions}
\mylabel{sec:HRsptc}

Let $\SB(z,q)$ be the generating function for the spt-crank function for
overpartitions \cite{Ga-JS13}. Then
\begin{align}
\SB(z,q)
&= \sum_{n=1}^\infty 
   \frac{q^n (q^{2n+2};q^2)_\infty}
   {(zq^n;q)_\infty (z^{-1}q^n;q)_\infty}
\mylabel{eq:SBdef1}
\\
&= \sum_{n=1}^\infty\sum_{m}N_{\SB}(m,n)z^mq^n.
\mylabel{eq:NSBdef}
\end{align}
We note that
$$
\SB(1,q) =
\sum_{n=1}^\infty 
   \frac{q^n (-q^{n+1};q)_\infty}
   {(1-q^n)^2 (q^{n+1};q)_\infty}
=
\sum_{n=1}^\infty
\sptBar{}{n} q^n,
$$
where $\sptBar{}{n}$ is the
number of 
smallest parts in the overpartitions of $n$, where we are using the convention
that the smallest part is not overlined.
The spt-crank function for overpartitions can be written in terms of
the rank and crank functions for overpartitions.
\beq
\SB(z,q) = \frac{1}{(1-z)(1-z^{-1})}
\sum_{n=1}^\infty \sum_m \left( \overline{N}(m,n) - \overline{M}(m,n)\right) z^m q^n,
\mylabel{eq:SBid}
\eeq
where
\beq
\sum_{n=0}^\infty\sum_{m} \overline{N}(m,n)z^mq^n
= \sum_{n=0}^\infty\frac{(-1;q)_n q^{n(n+1)/2}}
                        {(zq;q)_n  (z^{-1}q;q)_n},
\mylabel{eq:NBdef}
\eeq
and
\beq
\sum_{n=0}^\infty\sum_{m} \overline{M}(m,n)z^mq^n
= \frac{ (-q;q)_\infty (q;q)_\infty }                       
       {(zq;q)_\infty (z^{-1}q;q)_\infty}.
\mylabel{eq:MBdef}
\eeq
We find the following analog of Theorem \thm{mainthm}.
\begin{theorem}
\mylabel{thm:SBthm}
\begin{align}
&(1+z)(z)_\infty (z^{-1})_\infty (q)_\infty \SB(z,q)
\mylabel{eq:NEWSBid}
\\
&\qquad =
\sum_{n=0}^\infty 
\sum_{m=-[n/2]}^{[n/2]}
(-1)^{m+n} (1 - z^{n - 2\abs{m} + 1}) (1 - z^{n - 2\abs{m}}) z^{2\abs{m}-n}
q^{\frac{1}{2}(n^2 - 2m^2) + \frac{1}{2}n}.
\nonumber
\end{align}
\end{theorem}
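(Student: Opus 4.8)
The plan is to avoid rerunning the Bailey-pair/coefficient-extraction machinery and instead reduce \eqn{NEWSBid} to two facts already in hand: the two-variable identity \eqn{CONJ1a} for the overpartition rank function $H(z,q)$, and the classical Hecke--Rogers identity \eqn{HR2}. The entry point is the decomposition \eqn{SBid} of the overpartition spt-crank generating function into the overpartition rank and crank generating functions. Using \eqn{NBdef} and \eqn{MBdef}, and observing that the $n=0$ terms of both $\sum \overline{N}(m,n) z^m q^n$ and $\sum \overline{M}(m,n) z^m q^n$ equal $1$ and therefore cancel in the difference that appears in \eqn{SBid}, I would first rewrite \eqn{SBid} in the closed form
\[
\SB(z,q) = \frac{1}{(1-z)(1-z^{-1})}\left( H(z,q) - \frac{(-q;q)_\infty (q;q)_\infty}{(zq;q)_\infty (z^{-1}q;q)_\infty}\right),
\]
where $H(z,q)$ is the universal mock theta function \eqn{Hdef}, equal to the overpartition rank generating function by \eqn{NBHid}.

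Next I would multiply through by $(1+z)(z)_\infty (z^{-1})_\infty (q)_\infty$ and use the elementary factorizations $(z)_\infty = (1-z)(zq)_\infty$ and $(z^{-1})_\infty = (1-z^{-1})(z^{-1}q)_\infty$, so that the denominator $(1-z)(1-z^{-1})$ is absorbed. This turns the left side of \eqn{NEWSBid} into
\[
(1+z)(zq)_\infty (z^{-1}q)_\infty (q)_\infty H(z,q) \;-\; (1+z)(-q;q)_\infty (q;q)_\infty^2 .
\]
The first term is given verbatim by \eqn{CONJ1a}. For the second term, since $(-q;q)_\infty (q;q)_\infty = (q^2;q^2)_\infty$ we have $(-q;q)_\infty (q;q)_\infty^2 = (q;q)_\infty (q^2;q^2)_\infty = \prod_{n\ge1}(1-q^n)(1-q^{2n})$, which is evaluated by the classical Hecke--Rogers identity \eqn{HR2}. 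Both resulting double sums run over the same index set $\{(n,m): |m|\le[n/2]\}$ with the same $q$-exponent $\tfrac12(n^2-2m^2)+\tfrac12 n$ and the same sign $(-1)^{n+m}$, so the two sums combine term by term.

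The final step is the purely algebraic check that the $z$-parts match: with $a = n-2|m|$ one verifies
\[
z^{a+1} + z^{-a} - (1+z) = (1-z^{a+1})(1-z^a) z^{-a},
\]
which is immediate on expanding the right-hand side; substituting $a = n-2|m|$ yields exactly the summand of \eqn{NEWSBid}. I do not expect a genuine obstacle: the only care needed is the cancellation of the $n=0$ terms when passing from \eqn{SBid} to the closed form for $\SB(z,q)$, and recognizing that the ``crank correction'' $(1+z)(-q;q)_\infty (q;q)_\infty^2$ is, up to the factor $1+z$, precisely the left side of \eqn{HR2}. (One could instead prove \eqn{NEWSBid} directly by the coefficient-of-$z^k$ method used for \eqn{CONJ1a} and \eqn{CONJ2}, but the reduction above is shorter and reuses work already done.)
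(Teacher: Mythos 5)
Your proposal is correct and follows essentially the same route as the paper, whose proof of Theorem \ref{thm:SBthm} is a one-line assertion that \eqref{eq:NEWSBid} follows from \eqref{eq:HR2}, \eqref{eq:CONJ1a}, \eqref{eq:SBid}, \eqref{eq:NBdef} and \eqref{eq:MBdef} --- exactly the ingredients you combine. Your write-up simply supplies the details (the cancellation of the constant terms, the absorption of $(1-z)(1-z^{-1})$ into $(z)_\infty(z^{-1})_\infty$, and the algebraic identity $z^{a+1}+z^{-a}-(1+z)=(1-z^{a+1})(1-z^{a})z^{-a}$), all of which check out.
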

\begin{proof}
Equation \eqn{NEWSBid} follows in a straightforward manner from \eqn{HR2},
\eqn{CONJ1a},
\eqn{SBid}, \eqn{NBdef} and \eqn{MBdef}.
\end{proof}
If we divide both sides of \eqn{NEWSBid} by $(1-z)(1-z^{-1})$ and let
$z\to1$ we obtain
\begin{cor}
\mylabel{cor:SBcor}
\beq
\prod_{n=1}^\infty (1-q^n)^3 \sum_{n=1}^\infty\sptBar{}{n} q^n
=
\sum_{n=0}^\infty 
\sum_{m=-[n/2]}^{[n/2]}
(-1)^{m+n+1} 
\bin{n - 2\abs{m} + 1} 
q^{\frac{1}{2}(n^2 - 2m^2) + \frac{1}{2}n}.
\mylabel{eq:SBcorid}
\eeq
\end{cor}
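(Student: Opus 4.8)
The plan is to obtain \eqn{SBcorid} as an immediate specialization of Theorem \thm{SBthm}, following the same route used to deduce Corollary \corol{maincor} from Theorem \thm{mainthm}. First I would strip off the two linear factors that vanish at $z=1$ on the left of \eqn{NEWSBid}: writing $(z)_\infty = (1-z)(zq)_\infty$ and $(z^{-1})_\infty = (1-z^{-1})(z^{-1}q)_\infty$, division of \eqn{NEWSBid} by $(1-z)(1-z^{-1})$ replaces its left side by $(1+z)(zq)_\infty(z^{-1}q)_\infty(q)_\infty\SB(z,q)$. By \eqn{SBdef1} this is a power series in $q$ whose $q$-coefficients are Laurent polynomials in $z$, regular at $z=1$, so the limit $z\to1$ is just evaluation and gives $2\,(q)_\infty^3\,\SB(1,q) = 2\prod_{n=1}^\infty(1-q^n)^3\sum_{n=1}^\infty\sptBar{}{n}q^n$, using $\SB(1,q)=\sum_{n\ge1}\sptBar{}{n}q^n$.

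Next I would carry out the $z\to1$ limit on the right side of \eqn{NEWSBid} after the same division, term by term; this is legitimate because for each fixed power of $q$ only finitely many pairs $(m,n)$ contribute. From $(1-z)(1-z^{-1}) = -(1-z)^2/z$ and the first-order expansion $1-z^{a}\sim a(1-z)$ as $z\to1$, the summand indexed by $(m,n)$ divided by $(1-z)(1-z^{-1})$ tends to
$$(-1)^{m+n+1}\,(n-2\abs{m}+1)(n-2\abs{m})\,q^{\frac{1}{2}(n^2-2m^2)+\frac{1}{2}n},$$
where the terms with $n=2\abs{m}$ drop out on both sides. Since $(n-2\abs{m}+1)(n-2\abs{m}) = 2\bin{n-2\abs{m}+1}$, the right side becomes $2\sum_{n\ge0}\sum_{\abs{m}\le[n/2]}(-1)^{m+n+1}\bin{n-2\abs{m}+1}q^{\frac{1}{2}(n^2-2m^2)+\frac{1}{2}n}$. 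Equating with the left side and cancelling the common factor $2$ yields \eqn{SBcorid}.

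I expect the only genuine point requiring care to be the claim that the left side of \eqn{NEWSBid}, divided by $(1-z)(1-z^{-1})$, is regular at $z=1$, i.e. that the apparent double pole cancels. This follows from the factorization $(z)_\infty(z^{-1})_\infty = -\tfrac{(1-z)^2}{z}(zq)_\infty(z^{-1}q)_\infty$, in which the numerator $(1-z)^2$ exactly matches the denominator while $(zq)_\infty$, $(z^{-1}q)_\infty$, $(q)_\infty$ are non-vanishing and $\SB(z,q)$ regular at $z=1$; equivalently, and perhaps more cleanly, one notes as in the proof of Corollary \corol{maincor} that $\SB(z,q)$ is itself already regular at $z=1$, so no cancellation issue arises on the left, and the double zero of $(1-z^{n-2\abs{m}+1})(1-z^{n-2\abs{m}})$ (present exactly when $n>2\abs{m}$) absorbs the denominator on the right.
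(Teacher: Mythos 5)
Your proposal is correct and takes exactly the paper's route: the paper's entire proof is the one-line instruction to divide \eqn{NEWSBid} by $(1-z)(1-z^{-1})$ and let $z\to1$, and you have correctly supplied the details, including the matching factors of $2$ coming from $(1+z)\big|_{z=1}$ on the left and from $(n-2\abs{m}+1)(n-2\abs{m})=2\bin{n-2\abs{m}+1}$ on the right.
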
    

Let $\STwoB(z,q)$ be the generating function for the spt-crank function for
partitions with distinct odd parts and smallest part even \cite{Ga-JS13}. Then
\begin{align}
\STwoB(z,q)
&= \sum_{n=1}^\infty 
   \frac{q^{2n} (q^{2n+2};q^2)_\infty (-q^{2n+1};q^2)\infty}
   {(zq^{2n};q^2)_\infty (z^{-1}q^{2n};q^2)_\infty}
\mylabel{eq:S2def1}
\\
&= \sum_{n=1}^\infty\sum_{m}N_{\STwoB}(m,n)z^mq^n.
\nonumber
\end{align}
We note that
$$
\STwoB(1,q) =
\sum_{n=1}^\infty 
   \frac{q^{2n} (-q^{2n+1};q^2)_\infty}
   {(1-q^{2n})^2 (q^{2n+2};q^2)_\infty}
=
\sum_{n=1}^\infty
\Mspt{n} q^n,
$$
where $\Mspt{n}$ is the
number of 
smallest parts in the partitions of $n$ without repeated odd parts and
with smallest part even. This function was studied by Ahlgren, Bringmann and
Lovejoy \cite{Ah-Br-Lo11}.
Again we find this spt-crank function be written in terms of
the relevant rank and crank functions.
\beq
\STwoB(z,q) = \frac{1}{(1-z)(1-z^{-1})}
\sum_{n=1}^\infty \sum_m \left( {N2}(m,n) - {M2}(m,n)\right) z^m q^n,
\mylabel{eq:S2id}
\eeq
where
\beq
\sum_{n=0}^\infty\sum_{m} {N2}(m,n)z^mq^n
= \sum_{n=0}^\infty\frac{(-q;q^2)_n q^{n^2}}
                        {(zq^2;q^2)_n  (z^{-1}q^2;q^2)_n},
\mylabel{eq:N2def}
\eeq
and
\beq
\sum_{n=0}^\infty\sum_{m} {M2}(m,n)z^mq^n
= \frac{ (-q;q^2)_\infty (q^2;q^2)_\infty }                       
       {(zq^2;q^2)_\infty (z^{-1}q^2;q^2)_\infty}.
\mylabel{eq:M2def}
\eeq
We find the following analog of Theorem \thm{mainthm}.
\begin{theorem}
\mylabel{thm:S2thm}
\begin{align}
&(z;q^2)_\infty (z^{-1};q^2)_\infty (q^2;q^2)_\infty \STwoB(z,-q)
\mylabel{eq:NEWS2id}
\\
&\qquad =
\sum_{n=0}^\infty 
\sum_{m=0}^n 
(-1)^{n} (1 - z^{n -m})^2 z^{m-n}
q^{\frac{1}{2}(2n^2 - m^2) + \frac{1}{2}(2n-m)}.
\nonumber
\end{align}
\end{theorem}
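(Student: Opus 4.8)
\emph{Strategy.} The plan is to mirror the one-line proof of Theorem~\thm{SBthm}: split $\STwoB(z,-q)$ into its $M2$-rank and $M2$-crank pieces, recognize the rank piece as the left-hand side of \eqn{CONJ2}, recognize the crank piece as a product governed by the Hecke--Rogers identity \eqn{HR4}, and reconcile the two resulting double sums with the right-hand side of \eqn{NEWS2id}. No new $q$-hypergeometric input is needed beyond \eqn{CONJ2}, \eqn{HR4}, and the generating-function identities already recorded.

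\emph{Step 1: reduce to a formal identity.} I would first combine \eqn{S2id} with \eqn{N2Kid} and \eqn{M2def}. Since $\STwoB(z,-q)=\sum_{n,m}N_{\STwoB}(m,n)(-1)^n z^m q^n$, since $\sum_{n,m}N2(m,n)(-1)^n z^m q^n=K(z,q)$ by \eqn{N2Kid}, and since replacing $q$ by $-q$ in \eqn{M2def} gives $\sum_{n,m}M2(m,n)(-1)^n z^m q^n=(q;q^2)_\infty(q^2;q^2)_\infty/\bigl((zq^2;q^2)_\infty(z^{-1}q^2;q^2)_\infty\bigr)=(q;q)_\infty/\bigl((zq^2;q^2)_\infty(z^{-1}q^2;q^2)_\infty\bigr)$, equation \eqn{S2id} gives
\[
(1-z)(1-z^{-1})\,\STwoB(z,-q)=K(z,q)-\frac{(q;q)_\infty}{(zq^2;q^2)_\infty(z^{-1}q^2;q^2)_\infty}.
\]
Multiplying through by $(zq^2;q^2)_\infty(z^{-1}q^2;q^2)_\infty(q^2;q^2)_\infty$ and using $(z;q^2)_\infty(z^{-1};q^2)_\infty=(1-z)(1-z^{-1})(zq^2;q^2)_\infty(z^{-1}q^2;q^2)_\infty$ turns the left side into the left side of \eqn{NEWS2id} and the right side into
\[
(zq^2;q^2)_\infty(z^{-1}q^2;q^2)_\infty(q^2;q^2)_\infty K(z,q)-(q^2;q^2)_\infty(q;q)_\infty .
\]
Now substitute \eqn{CONJ2} for the first term and, since $(q;q)_\infty(q^2;q^2)_\infty=\prod_{n\ge1}(1-q^n)(1-q^{2n})$, substitute \eqn{HR4} for the second. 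Writing $P(n,m)=\tfrac12(2n^2-m^2)+\tfrac12(2n-m)$ and $Q(n,m)=\tfrac12(2n^2-m^2)+\tfrac12(2n+m)$ and expanding $(1-z^{n-m})^2z^{m-n}=z^{m-n}-2+z^{n-m}$, the claim \eqn{NEWS2id} reduces to the purely formal statement
\begin{multline*}
\sum_{n\ge0}\Bigl(\sum_{m=0}^n(-1)^n z^{m-n}q^{P(n,m)}+\sum_{m=1}^n(-1)^n z^{n-m+1}q^{Q(n,m)}\Bigr)-\sum_{n\ge0}\sum_{m=-n}^n(-1)^n q^{Q(n,m)}\\
=\sum_{n\ge0}\sum_{m=0}^n(-1)^n\bigl(z^{m-n}-2+z^{n-m}\bigr)q^{P(n,m)}.
\end{multline*}
The $z^{m-n}q^{P(n,m)}$ terms cancel between the two sides immediately, so it remains to show that the $z^{n-m+1}$-sum minus the double $Q$-sum equals $\sum_{n\ge0}\sum_{m=0}^n(-1)^n(z^{n-m}-2)q^{P(n,m)}$.

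\emph{Step 2: the bookkeeping.} This last verification is the only real obstacle, and it is elementary once one notes $Q(n,m)=P(n,m-1)$ and $Q(n,-m)=P(n,m)$, hence $P(n,j)=P(n,-j-1)$. Using $Q(n,m)=P(n,m-1)$ I would reindex $m\mapsto m-1$ in the $z^{n-m+1}$-sum, turning it into $\sum_{j=0}^{n-1}(-1)^n z^{n-j}q^{P(n,j)}$, and in the $Q$-sum over $-n\le m\le n$, turning it into $\sum_{j=-n-1}^{n-1}(-1)^n q^{P(n,j)}$; the symmetry $P(n,j)=P(n,-j-1)$ then folds the range $-n-1\le j\le n-1$ onto $0\le j\le n$, producing $2\sum_{m=0}^n(-1)^n q^{P(n,m)}-(-1)^n q^{P(n,n)}$. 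The stray $q^{P(n,n)}$ here exactly cancels the $j=n$ boundary term obtained when the reindexed $z^{n-j}$-sum is extended from $0\le j\le n-1$ to $0\le j\le n$, and what survives is precisely $\sum_{n\ge0}\sum_{m=0}^n(-1)^n(z^{n-m}-2)q^{P(n,m)}$. This completes the proof of \eqn{NEWS2id}.
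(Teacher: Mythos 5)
Your proof is correct and follows essentially the same route as the paper's: both reduce \eqn{NEWS2id} via \eqn{S2id}, \eqn{N2def}/\eqn{N2Kid} and \eqn{M2def} to the identity $(zq^2;q^2)_\infty(z^{-1}q^2;q^2)_\infty(q^2;q^2)_\infty K(z,q)-(q;q)_\infty(q^2;q^2)_\infty$, substitute \eqn{CONJ2} and \eqn{HR4}, and then reindex the $Q(n,m)=P(n,m-1)$ sum to merge everything into the single $P$-sum. Your Step 2 merely spells out the folding of the \eqn{HR4} double sum that the paper leaves implicit.
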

\begin{proof}
From \eqn{S2id}, \eqn{N2def}, \eqn{M2def} we have
\begin{align}
&(z;q^2)_\infty (z^{-1};q^2)_\infty (q^2;q^2)_\infty \STwoB(z,-q)
\mylabel{eq:NEWS2id2}
\\
&=
(zq^2;q^2)_\infty (z^{-1}q^2;q^2)_\infty (q^2;q^2)_\infty 
\sum_{n=0}^\infty\frac{(-1)^n (q;q^2)_n q^{n^2}}
                      {(zq^2;q^2)_n  (z^{-1}q^2;q^2)_n}
- (q;q)_\infty (q^2;q^2)_\infty.
\nonumber
\end{align}


We note that
\begin{align*}
&\sum_{n=0}^\infty 
\sum_{m=1}^{n} (-1)^{n}
z^{n-m+1} q^{\frac{1}{2}
(2n^2-m^2) + \frac{1}{2}(2n+m)}
\\
&\qquad=
\sum_{n=0}^\infty 
\sum_{m=1}^{n} (-1)^{n}
z^{n-(m-1)} q^{\frac{1}{2}
(2n^2-(m-1)^2) + \frac{1}{2}(2n-(m-1))}
\\
&\qquad=
\sum_{n=0}^\infty 
\sum_{m=0}^{n-1} (-1)^{n}
z^{n-m} q^{\frac{1}{2}
(2n^2-m^2) + \frac{1}{2}(2n-m))}.
\end{align*}
Thus from \eqn{HR4}, \eqn{CONJ2} and \eqn{NEWS2id2} we have
\begin{align}
&(z;q^2)_\infty (z^{-1};q^2)_\infty (q^2;q^2)_\infty \STwoB(z,-q)
\mylabel{eq:NEWS2idcopy}
\\
&\qquad =
\sum_{n=0}^\infty 
\sum_{m=0}^n 
(-1)^{n} (z^{m-n} + z^{n-m} - 2)
q^{\frac{1}{2}(2n^2 - m^2) + \frac{1}{2}(2n-m)}
\nonumber
\\
&\qquad =
\sum_{n=0}^\infty 
\sum_{m=0}^n 
(-1)^{n} (1 - z^{n -m})^2 z^{m-n}
q^{\frac{1}{2}(2n^2 - m^2) + \frac{1}{2}(2n-m)},
\nonumber
\end{align}
which is the result.
\end{proof}
If we divide both sides of \eqn{NEWS2id} by $(1-z)(1-z^{-1})$ and let
$z\to1$ we obtain
\begin{cor}
\mylabel{cor:maincorS2}
\beq
\prod_{n=1}^\infty (1-q^{2n})^3 \sum_{n=1}^\infty (-1)^n \Mspt{n} q^n
= \sum_{n=1}^\infty \sum_{m=0}^n (-1)^{n+1} (n - m)^2                            
q^{\frac{1}{2}(2n^2 - m^2) + \frac{1}{2}(2n-m)}.
\mylabel{eq:NEWM2SPTid}
\eeq
\end{cor}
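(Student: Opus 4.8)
The plan is to obtain Corollary~\corol{maincorS2} as a direct specialization of Theorem~\thm{S2thm}: divide both sides of \eqn{NEWS2id} by $(1-z)(1-z^{-1})$ and let $z\to1$, taking the limit termwise in $q$. On the left, factor $(z;q^2)_\infty=(1-z)(zq^2;q^2)_\infty$ and $(z^{-1};q^2)_\infty=(1-z^{-1})(z^{-1}q^2;q^2)_\infty$, so that
\[
\frac{(z;q^2)_\infty (z^{-1};q^2)_\infty (q^2;q^2)_\infty \STwoB(z,-q)}{(1-z)(1-z^{-1})}
= (zq^2;q^2)_\infty (z^{-1}q^2;q^2)_\infty (q^2;q^2)_\infty \STwoB(z,-q).
\]
By \eqn{S2def1} the series $\STwoB(z,q)$ is, coefficientwise in $q$, a Laurent polynomial in $z$ (the relevant denominators $(zq^{2n};q^2)_\infty(z^{-1}q^{2n};q^2)_\infty$ have $n\ge1$, hence carry no factor $1-z^{\pm1}$), so it is regular at $z=1$ and the limit may be taken term by term. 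Using $\STwoB(1,-q)=\sum_{n\ge1}(-1)^n\Mspt{n}q^n$ together with $(zq^2;q^2)_\infty(z^{-1}q^2;q^2)_\infty(q^2;q^2)_\infty\to(q^2;q^2)_\infty^3$ yields the left-hand side of \eqn{NEWM2SPTid}.

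For the right-hand side, write $(1-z)(1-z^{-1})=-z^{-1}(1-z)^2$, so that each summand in the double sum of \eqn{NEWS2id}, after division, becomes
\[
\frac{(1-z^{n-m})^2 z^{m-n}}{(1-z)(1-z^{-1})}
= -\,z^{m-n+1}\left(\frac{1-z^{n-m}}{1-z}\right)^2
= -\,z^{m-n+1}\bigl(1+z+\cdots+z^{\,n-m-1}\bigr)^2,
\]
which tends to $-(n-m)^2$ as $z\to1$. Since for each power of $q$ only finitely many pairs $(m,n)$ contribute and each contribution is continuous at $z=1$, the termwise limit gives $\sum_{n\ge0}\sum_{m=0}^n(-1)^{n+1}(n-m)^2 q^{\frac12(2n^2-m^2)+\frac12(2n-m)}$; the $n=0$ term vanishes (forced $m=0$, hence $(n-m)^2=0$), so the summation may start at $n=1$, which is exactly the right-hand side of \eqn{NEWM2SPTid}. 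Equating the two limits proves the corollary.

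I expect essentially no obstacle here: the only point requiring care is the legitimacy of passing to the limit $z\to1$ inside both series, and this is immediate once one observes that the $(1-z)(1-z^{-1})$ singularity on the left is entirely supplied by the factor $(z;q^2)_\infty(z^{-1};q^2)_\infty$, leaving power series in $q$ whose $q$-coefficients are functions of $z$ regular (indeed Laurent-polynomial) at $z=1$ on both sides.
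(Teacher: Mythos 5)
Your proposal is correct and is exactly the paper's argument: the paper obtains Corollary \corol{maincorS2} by dividing \eqn{NEWS2id} by $(1-z)(1-z^{-1})$ and letting $z\to1$, and your computation (cancelling the singular factor via $(z;q^2)_\infty=(1-z)(zq^2;q^2)_\infty$ on the left and evaluating $-z^{m-n+1}\bigl((1-z^{n-m})/(1-z)\bigr)^2\to-(n-m)^2$ on the right) just supplies the details the paper leaves implicit.
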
    
Define the function $\beta(n)$ by
\beq
\sum_{n=1}^\infty \beta(n) q^n = 
\prod_{n=1}(1 - q^{16n})^3 \sum_{n=1}^\infty (-1)^n \Mspt{n} q^{8n+1},
\mylabel{eq:betadef}
\eeq
so that $\beta(n)=0$ if $n$ is not a positive integer congruent to
$1\pmod{8}$. We have
\begin{cor}
\mylabel{cor:M2heckecong}
Suppose $\ell \equiv\pm3\pmod{8}$ is prime. Then
\begin{align*}
&\beta(\ell n)+\ell^2 \beta(n/\ell)
=0,\qquad\mbox{if $\ell\equiv3\pmod{8}$},\\
&\beta(\ell n)-\ell^2 \beta(n/\ell)
=0,\qquad\mbox{if $\ell\equiv5\pmod{8}$}.  
\end{align*}
\end{cor}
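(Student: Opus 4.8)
The plan is to run the argument of Corollary~\corol{heckecong}, but starting from the $M2$-spt identity \eqn{NEWM2SPTid} in place of \eqn{NEWSPTid}. First I would replace $q$ by $q^8$ in \eqn{NEWM2SPTid} and multiply through by $q$; comparing with the definition \eqn{betadef} this gives
\[
\sum_{n=1}^\infty \beta(n)\, q^n
= \sum_{n\ge 1}\sum_{m=0}^n (-1)^{n+1}(n-m)^2\, q^{\,8n(n+1)-4m(m+1)+1}.
\]
The crucial algebraic step is to complete the square in the exponent:
\[
8n(n+1)-4m(m+1)+1 = 2(2n+1)^2-(2m+1)^2 .
\]
Thus, writing $a=2n+1$ and $b=2m+1$ (both positive odd, with $b\le a$), the value $\beta(N)$ is the sum of $(-1)^{n+1}(n-m)^2$ over all representations $2a^2-b^2=N$ of this shape; one may as well allow $n=0$, since that term vanishes.

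Next I would record the elementary number-theoretic fact that underpins everything: if $\ell$ is an odd prime for which $2$ is a quadratic non-residue --- equivalently $\ell\equiv\pm3\pmod 8$ --- then $2a^2-b^2\equiv 0\pmod\ell$ forces $\ell\mid a$ and $\ell\mid b$. Indeed, if $\ell\nmid b$ then $b$ is invertible mod $\ell$ and $2\equiv (ab^{-1})^2\pmod\ell$, impossible unless also $\ell\mid a$, in which case $\ell\mid b$ as well, a contradiction.

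Finally, the Hecke relation falls out of a change of variables exactly as in the proof of Corollary~\corol{heckecong}. In the sum for $\beta(\ell n)$ every representation $2a^2-b^2=\ell n$ has $a=\ell a'$, $b=\ell b'$ with $a',b'$ positive odd (as $\ell$ is odd) and $b'\le a'$; then $2a'^2-b'^2=n/\ell$ (if $\ell\nmid n$ there are no such terms and both sides of the asserted identity vanish), the weight satisfies $(n-m)^2=\bigl(\tfrac{a-b}{2}\bigr)^2=\ell^2\bigl(\tfrac{a'-b'}{2}\bigr)^2$, and from $2n+1=\ell(2n'+1)$, where $a'=2n'+1$, one gets $n=\ell n'+\tfrac{\ell-1}{2}$, hence $(-1)^{n+1}=(-1)^{(\ell-1)/2}(-1)^{n'+1}$. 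Summing term by term yields $\beta(\ell n)=(-1)^{(\ell-1)/2}\,\ell^2\,\beta(n/\ell)$, and since $(-1)^{(\ell-1)/2}=-1$ for $\ell\equiv 3\pmod 8$ and $+1$ for $\ell\equiv 5\pmod 8$, this is precisely the pair of identities in the statement. The only genuine work is bookkeeping --- matching the range $0\le m\le n$ with $0\le m'\le n'$ and carrying the sign correctly through the substitution --- since all the analytic content already resides in \eqn{NEWM2SPTid}.
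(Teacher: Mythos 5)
Your proof is correct and is essentially the paper's own argument: both rewrite \eqn{NEWM2SPTid} as a weighted sum over representations $2(2n+1)^2-(2m+1)^2=N$, invoke that $2$ is a quadratic non-residue modulo a prime $\ell\equiv\pm3\pmod 8$ to force $\ell$ to divide both $2n+1$ and $2m+1$, and rescale to pick up the factor $\ell^2$ together with the sign $(-1)^{(\ell-1)/2}$. The only cosmetic difference is that the paper packages that sign as the Kronecker symbol $\leg{-4}{\ell}$ after recasting the double sum with the symbols $\leg{-4}{n}\leg{4}{m}$, while you compute it directly from $n=\ell n'+\tfrac{\ell-1}{2}$.
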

\begin{proof}
From \eqn{NEWM2SPTid}, \eqn{betadef} we have
\begin{align*}
\sum_{n=1}^\infty \beta(n) q^n 
&= \sum_{n=0}^\infty \sum_{j=0}^n (-1)^{n+1} (n-m)^2 q^{2(2n+1)^2 - (2m+1)^2}
\\
&= -\sum_{n=1}^\infty \sum_{m=1}^n 
\left( \frac{n-m}{2} \right)^2
\leg{-4}{n} \leg{4}{m} 
q^{2n^2 - m^2}.
\end{align*}
Suppose $\ell$ is prime and $\ell \equiv \pm3\pmod{8}$. Then
we observe that 
$$
2n^2 - m^2 \equiv 0\pmod{\ell} \qquad\mbox{if and only if}\quad
n\equiv m\equiv 0 \pmod{\ell},
$$
since $2$ is quadratic nonresidue mod $\ell$. Hence
$$
\beta(\ell n) = \leg{-4}{\ell} \, \ell^2 \beta(n/\ell),
$$
which gives the result.
\end{proof}

\section{Concluding remarks}

There are other two-variable Hecke-Rogers identities in the literature.
Andrews \cite{An-84} proved the following identity
\beq
\prod_{n=1}^\infty \frac{ (1-q^n)^2 }{ (1-zq^n) (1-z^{-1}q^{n-1})}
=
\sum_{n=0}^\infty\sum_{m=-n}^n (-1)^{m+n} z^m
q^{ \frac{1}{2}(n^2-m^2) + \frac{1}{2}(n+m)},
\mylabel{eq:ANDID}
\eeq
where $\abs{q}<1$ and $1< \abs{z} < \abs{q}^{-1}$. Andrews used this identity
to show how elementary $q$-series techniques
could be used to prove identities such as \eqn{HR1}--\eqn{HR4}.
Hickerson and Mortenson \cite{Hi-Mo12} studied the function
\beq
f_{a,b,c}(x,y,q):=\sum_{\substack{\sgn (r)=\sgn(s)}} 
\sgn(r)(-1)^{r+s}x^ry^sq^{a\binom{r}{2}+brs+c\binom{s}{2}}.
\mylabel{eq:fabc-def}            
\eeq
They found a general identity for this function in terms of Apell-Lerch sums
and theta functions. Their formula not only proves the known Hecke-Rogers
identities such as \eqn{HR1}--\eqn{HR4} but also leads to new 
straightforward proofs of many of the classical mock theta function identities,
including a new proof of the mock theta conjectures \cite{An-Ga89}. 
It would interesting to determine whether the methods and results of
Hickerson and Mortenson \cite{Hi-Mo12} can be used to give an alternative
proof of our main result Theorem \thm{rankthm}. It would also be interesting
to see whether the theory of mock Jacobi forms \cite{Br-Ra-Ri}, \cite{Da-Mu-Za},
\cite{Zw} could be developed to derive these results.

Our proof of our rank function result \eqn{NEWrankid} depends on first
proving the spt-crank result \eqn{NEWSid} in Theorem \thm{mainthm}.
The proof of \eqn{NEWSid} utilizes the method of Bailey pairs. It is possible 
to give a direct proof of \eqn{NEWrankid} using Bailey pair technology.
We leave this to the interested reader. We were unable to find a proof
of the other rank-type function results \eqn{CONJ1a}--\eqn{CONJ2}
by the method of Bailey pairs.

Lovejoy \cite{Lo12} has found a number of identities that give certain
$q$-hypergeometric sums in terms of two-variable Hecke-Rogers type series
using the method of Bailey pairs. Lovejoy \cite{Lo14} has also found
families of $q$-hypergeometric mock theta multisums in terms of
Hecke-Rogers type double series.

Mortenson \cite{Mo13b} has utilized Lovejoy's \cite{Lo12} method also
obtain some similar identities. We give three examples.
From results in \cite[Section 4.3]{Mo13b} it can be shown that
\begin{align}
&(q)_\infty (1 + z^{-1}) \sum_{n=0}^\infty
\frac{ (-zq;q^2)_n (-z^{-1}q;q^2)_n q^{2n}}{ (q;q^2)_{n+1} }
\mylabel{eq:MORTID1}
\\
&\qquad =  \sum_{n=0}^\infty \sum_{m=0}^{[n/3]}
(z^{n-3m} + z^{3m-n-1}) q^{(n^2-3m^2) + (2n-m)}(1 - q^{4n-4m+6}).
\nonumber
\end{align}
Dividing both sides by $(1+z^{-1})$ and letting $z\to-1$ yields
\begin{align}
&(q)_\infty \sum_{n=0}^\infty
\frac{ (q;q^2)_n q^{2n}}{ (1-q^{2n+1}) }
\mylabel{eq:MORTID1B}
\\
&\qquad =  \sum_{n=0}^\infty \sum_{m=0}^{[n/3]}
(-1)^{m+n}(2n - 6n + 1) q^{(n^2-3m^2) + (2n-m)}(1 - q^{4n-4m+6}).
\nonumber
\end{align}
Similarly, from results in \cite[Section 4.4]{Mo13b} we find that
\begin{align}
&(-q;q^4)_\infty (-q^3;q^4)_\infty (q^4;q^4)_\infty 
(1 + z^{-1}) \sum_{n=0}^\infty
\frac{ (zq;q^2)_n (z^{-1}q;q^2)_n q^{2n}}{ (q;q)_{2n+1} }
\mylabel{eq:MORTID2}
\\
&\qquad = \sum_{n=0}^\infty \sum_{m=0}^{[n/2]}
(-1)^m(z^{m} + z^{-m-1}) q^{\frac{1}{2}(n^2-2m^2) + \frac{1}{2}(3n-2m)},
\nonumber
\end{align}
and
\begin{align}
&\sum_{n=0}^\infty q^{\frac{1}{2}n(n+1)}                          
\sum_{n=0}^\infty
\frac{ (-q;q^2)_n q^{2n}}{ (-q^2;q^2)_{n} (1 + q^{2n+1}) }
\mylabel{eq:MORTID2B}
\\
&\qquad = \sum_{n=0}^\infty \sum_{m=0}^{[n/2]}
(2m + 1) q^{\frac{1}{2}(n^2-2m^2) + \frac{1}{2}(3n-2m)}.
\nonumber
\end{align}
Also, from results in \cite[Section 4.6]{Mo13b} we find that
\begin{align}
&(q;q^2)_\infty (q;q)_\infty  
(1 + z^{-1}) \sum_{n=0}^\infty
\frac{ (-zq;q)_n (-z^{-1}q;q^2)_n q^{n+1}}{ (q;q^2)_{n} }
\mylabel{eq:MORTID3}
\\
&\qquad = \sum_{n=0}^\infty \sum_{m=0}^{[n/2]}
(-1)^m(z^{n-2m} + z^{-n+2m-1}) q^{\frac{1}{2}(n^2-2m^2) + \frac{1}{2}(3n-2m)},
\nonumber
\end{align}
and
\begin{align}
&\sum_{n=-\infty}^\infty (-1)^n q^{n^2}
\sum_{n=0}^\infty
\frac{ (q;q)_n^2 q^{n}}{ (q;q^2)_{n+1} }
\mylabel{eq:MORTID3B}
\\
&\qquad = \sum_{n=0}^\infty \sum_{m=0}^{[n/2]}
(-1)^{m+n}(2n - 4m + 1) q^{\frac{1}{2}(n^2-2m^2) + \frac{1}{2}(3n-2m)}.
\nonumber
\end{align}

\noindent
\textbf{Acknowledgements}

\noindent
I would like to thank
Kathrin Bringmann, Freeman Dyson, Mike Hirschhorn,
Robert Osburn, Steve Milne, Eric Mortenson and Martin Raum for their comments and 
suggestions. In particular, I thank Steve Milne for earlier pointing out his
bijective proof \cite{Mi94} of \eqn{MILid}, and I thank Eric Mortenson for his 
detailed
comments and the results \eqn{MORTID1}--\eqn{MORTID3B}.                       
Finally, I thank Doron Zeilberger for inviting me to present the preliminary
results of this paper in his Experimental Math Seminar on April 25, 2013.


\bibliographystyle{amsplain}

\end{document}